\newtheorem{theorem}{Theorem}
\newtheorem{proposition}[theorem]{Proposition}
\newtheorem{corollary}[theorem]{Corollary}
\newtheorem{lemma}[theorem]{Lemma}
\newtheorem{problem}[theorem]{Problem}
\def\vertex(#1){\put(#1){\circle*{1.8}}}
\def\lab(#1)#2{\put(#1){\makebox(0,0)[c]{#2}}}
\newcommand{\ggrz}{\gamma_{gr}^{Z}}
\newcommand{\ggr}{\gamma_{gr}}
\newcommand{\ggrt}{\gamma_{gr}^{t}}
\newcommand{\ggrl}{\gamma_{gr}^{L}}
\newcommand{\NN}{{\mathbb N}}
\newcommand{\2}{\vspace{0.2cm}}
\begin{document}

\title{Graphs with unique Grundy dominating sets}

\author{
Bo\v stjan Bre\v sar$^{a,b}$ \and
Tanja Dravec$^{a,b}$ }

\maketitle

\begin{center}
$^a$ Faculty of Natural Sciences and Mathematics, University of Maribor, Slovenia\\
{\tt bostjan.bresar@um.si; tanja.dravec@um.si}
\\
\medskip

$^b$ Institute of Mathematics, Physics and Mechanics, Ljubljana, Slovenia\\
 
\medskip

\end{center}

\author{
}

\maketitle

\begin{abstract}
Given a graph $G$ consider a procedure of building a dominating set $D$ in $G$ by adding vertices to $D$ one at a time in such a way that whenever vertex $x$ is added to $D$ there exists a vertex $y\in N_G[x]$ that becomes dominated only after $x$ is added to $D$. 
The maximum cardinality of a set $D$ obtained in the described way is called the Grundy domination number of $G$ and $D$ a Grundy dominating set. While a Grundy dominating set of a connected graph $G$ is not unique unless $G$ is the trivial graph, we consider a natural weaker uniqueness condition, notably that for every two Grundy dominating sets in a graph $G$ there is an automorphism that maps one to the other. We investigate both versions of uniqueness for several concepts of Grundy domination, which appeared in the context of domination games and are also closely related to zero forcing.  For each of the four variations of Grundy domination we characterize the graphs that have only one Grundy dominating set of the given type, and characterize those forests that enjoy the weaker (isomorphism based) condition of uniqueness. The latter characterizations lead to efficient algorithms for recognizing the corresponding classes of forests.

%The concept of zero forcing was introduced in the context of linear algebra, and was further studied by both graph theorists and linear algebraists. It is based on the process of activating vertices of a graph $G$ starting from a set of vertices that are already active, and applying the rule that an active vertex with exactly one non-active neighbor forces that neighbor to become active. A set $S\subset V(G)$ is called a zero forcing set of $G$ if initially only vertices of $S$ are active and the described process enforces all vertices of $G$ to become active. The size of a minimum zero forcing set in $G$ is called the zero forcing number of $G$.  While a minimum zero forcing set can only be unique in edgeless graphs, we consider the weaker uniqueness condition, notably that for every two minimum zero forcing sets in a graph $G$ there is an automorphism that maps one to the other. We characterize the class of trees that enjoy this  condition by using properties of minimum path covers of trees. In addition, we investigate both variations of uniqueness for several concepts of Grundy domination, which first appeared in the context of domination games, yet they are also closely related to zero forcing.  For each of the four variations of Grundy domination we characterize the graphs that have only one Grundy dominating set of the given type, and characterize those forests that enjoy the weaker (isomorphism based) condition of uniqueness. The latter characterizations lead to efficient algorithms for recognizing the corresponding classes of forests. 
\end{abstract}

{\small \textbf{Keywords:} Grundy total domination number; Grundy domination number; zero forcing number; tree, graph automorphism} \\
\indent {\small \textbf{AMS subject classification:} 05C69, 05C05, 05C35, 05C60.}

%%%%%%%%%%%%%%%%%%%%%%%%%%%%%%%%%%%%%%%%%%%%%%%%%%%%%%%%%%%%%%%%%%%%%
%%%%%%%%%%%%%%%%%%%%%%%%%%%%%%%%%%%%%%%%%%%%%%%%%%%%%%%%%%%%%%%%%%%%%
\section{Introduction}
%%%%%%%%%%%%%%%%%%%%%%%%%%%%%%%%%%%%%%%%%%%%%%%%%%%%%%%%%%%%%%%%%%%%%
%%%%%%%%%%%%%%%%%%%%%%%%%%%%%%%%%%%%%%%%%%%%%%%%%%%%%%%%%%%%%%%%%%%%%
Finding an extremal set that attains a given graph invariant is the most basic problem concerning graph invariants. Another basic question is how many extremal sets for a given invariant are there in a graph. For instance, this question was studied recently in relation with the number of minimum total dominating sets~\cite{HR} and the number of minimum dominating sets~\cite{ADMR}. In 1985, Hopkins and Staton~\cite{hs-1985} studied the graphs that have a unique maximum independent set, and called them the unique independence graphs. The study of these graphs was continued by Gunther et al.~\cite{ghr-1993}. Recently Jaume and Molina~\cite{jm-2018} provided an algebraic characterization of unique independence trees, which can be used for efficient recognition of such trees. In another paper of Gunther et al.~\cite{ghmr-1994} the trees that have a unique minimum dominating set were characterized, while Haynes and Henning in~\cite{hahe-2002} characterized the trees with a unique minimum total dominating set. Two recent papers considered graphs, and in particular trees, that have unique maximum (open) packings~\cite{bope-2020+,bks-2019}. In addition, it was proved in~\cite{bks-2019} that the recognition of the graphs with a unique maximum (open) packing is polynomially equivalent to the recognition of the graphs with a unique maximum independent set, and that the complexity of all three problems is not polynomial, unless P=NP~\cite{bks-2019}. In this paper, we extend these investigation to several other graph parameters, namely to the four Grundy domination invariants and the zero forcing number. 

Let $G$ be a graph, and $v\in V(G)$. The {\em open} (respectively {\em closed}) {\em neighborhood} of a vertex $v$ in $G$ is the set $N_G(v)$ (respectively $N_G[v]$) that contains all neighbors of $v$ (respectively, $N_G[v]=N_G(v)\cup\{v\}$). We say that a vertex $v$ {\em dominates} vertices in $N_G[v]$. A set $D\subseteq V(G)$ is a {\em dominating set} of $G$ if every vertex $u\in V(G)$ is dominated by some $v\in D$.
A vertex $v$ {\em totally dominates} vertices in $N_G(v)$. A set $D\subseteq V(G)$ is a {\em total dominating set} if every vertex $u\in V(G)$ is totally dominated by a vertex $v\in D$. The minimum cardinality of a (total) dominating set is the ({\em total}) {\em domination} number of $G$, denoted by $\gamma(G)$ (respectively, $\gamma_t(G)$). 
Now, consider domination as a process of adding vertices to a dominating set of a graph $G$, which results in a sequence of vertices in $G$ such that each vertex $x$ in the sequence dominates a vertex that was not dominated by vertices that precede $x$ in the sequence. A longest such sequence in a graph $G$ is called a {\em Grundy dominating sequence} and its length is the {\em Grundy domination number} of $G$, denoted by $\ggr(G)$. Vertices of a Grundy dominating sequence form a {\em Grundy dominating set} of $G$. A {\em Grundy total dominating sequence (resp.~set)} is defined similarly by requiring that every vertex in a sequence totally dominates a vertex that was not totally dominated by preceding vertices. These concepts  were initiated in~\cite{bgmrr-2014,bhr-2016}, where the initial motivation for their study came from domination games. Additionally, the Grundy domination invariants represent the worst case scenario in a procedure in which a dominating set is built by adding vertices one by one to a dominating set; see the book~\cite{bookDomGame} on domination games, and references therein concerning Grundy domination invariants. 

A small modification of the condition for Grundy dominating sequence, by changing one of the closed neighborhoods to open neighborhoods in the definition, yields the so-called L-Grundy dominating sequences and the Z-Grundy dominating sequences. As it turns out, the Z-Grundy dominating sets are closely related to zero forcing sets~\cite{bbgkkptv-2017}. The zero forcing number of a graph was introduced in~\cite{AIM} the main motivation being that its minimum rank is bounded by the zero forcing number. The concept was studied in a number of subsequent papers; see~\cite{Barioli-2010,davila,edholm,ghh-2021+,hog,l-2019} for a short selection. As proved in~\cite{bbgkkptv-2017}, in any graph $G$ the complement in $V(G)$ of a Z-Grundy dominating set is always a zero forcing set and vice-versa. In particular, the Z-Grundy domination number $\ggrz(G)$ equals $n(G)-Z(G)$ in any graph $G$, where $Z(G)$ stands for the zero forcing number of $G$. Consequently any result concerning Z-Grundy domination immediately yields the corresponding result on zero forcing. Lin further explored the relations between all four types of Grundy domination numbers and the corresponding zero forcing and minimum rank parameters~\cite{l-2019}. 
 
In this paper, we study graphs in which, for each of the four Grundy domination numbers, a corresponding Grundy dominating set is unique. If $G$ is graph in which there is only one Grundy dominating set (respectively, Grundy total dominating set, Z-Grundy dominating set, L-Grundy dominating set), then $G$ is a {\em unique Grundy domination graph} (respectively, {\em unique Grundy total domination graph, unique Z-Grundy domination graph, unique L-Grundy domination graph}). Already for the first instance  one easily finds that there are no non-trivial connected graphs that have a unique Grundy dominating set, and also for other invariants the graphs with such uniqueness property are very special. In many graphs, uniqueness of the corresponding Grundy dominating-type set is violated only through the action of an automorphism, which in a sense blurs the picture, since we do not really distinguish between such two sets. Hence, we extend our investigation with the following definitions. 

If $G$ is a graph such that for every two Grundy dominating sets  $A$ and $B$ there exists an automorphism $\phi:V(G)\rightarrow V(G)$ such that $\phi(A)=B$, then $G$ is an {\em iso-unique Grundy domination graph}. In the same way, considering the corresponding sets up to automorphisms we define {\em iso-unique Grundy total domination graphs, iso-unique Z-Grundy domination graphs} and {\em iso-unique L-Grundy domination graphs}). 
By the above observations, the class of unique zero forcing graphs (meaning of which should be clear) coincides with the class of unique Z-Grundy domination graphs, and the same holds for iso-unique variations of both concepts. Since a graph $G$ is (iso-)unique with respect to any of the four mentioned invariants if and only if every connected component of $G$ is (iso-)unique, we will consider just connected graphs, the results of which can be trivially extended to all graphs.

The paper is organized as follows. Each section is devoted to (iso-)uniqueness of one of the four Grundy domination concepts. As it turns out, the approaches to their investigations are quite different, which reflects their specific properties. In Section~\ref{sec:GD}, we investigate unique Grundy domination graphs and prove that trivial graph is the only connected graph having this property. In addition, we show that complete graphs are the only connected iso-unique Grundy domination graphs. In Section~\ref{sec:ZF}, we concentrate just on a weaker uniqueness condition, as the characterization of unique $Z$-Grundy domination graphs follows from~\cite{Barioli-2010}. We present a characterization of iso-unique zero-forcing trees by using the concept of path cover of a tree; the result also leads to a polynomial (quadratic) algorithm for recognizing iso-unique zero forcing trees. The study of uniqueness with respect to Grundy total domination, in Section~\ref{sec:TGD}, is again more involved. The unique Grundy total domination graphs are characterized by using a characterization from~\cite{bhr-2016} of the graphs whose Grundy total domination number equals their order. Moreover, we characterize the iso-unique Grundy total domination trees, and present a linear-time algorithm for recognition of these trees. Section~\ref{sec:LGD} is concerned with the fourth version of Grundy domination, the so-called L-Grundy domination, where we again characterize all graphs that have a unique extremal set for this invariant. In addition, it turns out that all trees are iso-unique L-Grundy domination graphs.  

We complete this section with some basic definitions that will be used in the paper.
The {\em degree}, $\deg_G(v)$, of a vertex $v$ in a graph $G$ is $|N_G(v)|$. A vertex $v$ with $\deg_G(v)=0$ is an {\em isolated vertex}. A {\em leaf} is a vertex of degree $1$. A vertex adjacent to a leaf is a {\em support vertex}. A support vertex adjacent to at least two leaves is a {\em strong support vertex}. 
A path between vertices $u$ and $v$ is a $u,v$-{\em path}.
The {\em distance}, $d_G(u,v)$, between vertices $u$ and $v$ in a graph $G$ is the length (number of edges) of a shortest $u,v$-path. The {\em eccentricity} of a vertex $v$ in a graph $G$ is ecc$_G(v)=\max\{d_G(v,x):\, x\in V(G)\}$. The {\em center} of $G$ is the set of all vertices in $G$, which have minimum eccentricity. It is well known that the center of a tree is either a vertex or a pair of adjacent vertices. 
(We may omit the indices in the corresponding notions if the graph $G$ is clear from the context.) Let $[n] = \{1\ldots,n\}$, where $n\in \NN$. For a graph $G=(V,E)$ its order is denoted by $n(G)$, i.e.\ $n(G)=|V(G)|$. 
For a sequence $S=(v_1,\ldots,v_k)$ of distinct vertices of a graph $G$, $\widehat{S}$ denotes the set of vertices from $S$. (Note that two different sequences $S$ and $T$ may have $\widehat{S}=\widehat{T}$, while the uniqueness in this paper is considered only with respect to the sets, not the sequences.) The initial segment $(v_1,\dots,v_i)$ of $S$ will be denoted by $S_i$. Given sequences $S=(v_1,\ldots,v_k)$ and $S'=(u_1,\ldots,u_m)$ of vertices in $G$  such that $\widehat S\cap\widehat{S'}=\emptyset$, $S\oplus S'$ is the {\em concatenation} of $S$ and $S'$, that is, $S\oplus S'=(v_1,\ldots,v_k,u_1,\ldots,u_m)$.

%%%%%%%%%%%%%%%%%%%%%%%%%%%%%%%%%
%%%%%%%%%%%%%%%% GRUNDY
%%%%%%%%%%%%%%%%%%%%%%%%%%%%%%%%

\section{Unique Grundy domination graphs}
\label{sec:GD}

In this section, we characterize unique Grundy domination graphs and iso-unique Grundy domination graphs. We mention that since both classes of graphs are very simple, these graphs can be efficiently recognized. We start the section by formal definitions; see e.g.~\cite{bgmrr-2014, Erey,nt-2020}.

A sequence $S=(v_1,\ldots,v_k)$ of distinct vertices of $G$ is a {\em closed neighborhood sequence}, if for each $i\in [k]$:

\begin{equation}
\label{eq:defGrundy}
N[v_i] \setminus \bigcup_{j=1}^{i-1}N[v_j]\not=\emptyset.
\end{equation}

\noindent The minimum length of a closed neighborhood sequence $S$ such that $\widehat{S}$ is a dominating set is the domination number $\gamma(G)$ of a graph $G$. The maximum length of a closed neighborhood sequence in $G$ is the {\em Grundy domination number}, $\gamma_{gr}(G)$ of $G$, and a corresponding set $\widehat{S}$ is a {\em Grundy dominating set}. The corresponding sequence $S$ is a {\em Grundy dominating sequence} of $G$, or $\ggr(G)$-{\em sequence} for short.

Let $S=(v_1,\ldots ,v_k)$ be a closed neighborhood sequence. We say that for each $i\in [k]$ vertex $v_i$ {\em footprints (with respect to $S$)} the vertices in $N[v_i] \setminus \bigcup_{j=1}^{i-1}N[v_j]$, and that $v_i$ is the {\em footprinter (with respect to $S$)} of any $u\in N[v_i] \setminus \bigcup_{j=1}^{i-1}N[v_j]$. 
Alternatively, the footprinter of $x$ is $v_i\in S$, where $i\in [k]$ is the minimum index $j$ such that $x\in N[v_j]$.
Since each vertex has a unique footprinter, the function $f: V(G) \to \widehat{S}$ which maps a vertex to its footprinter is well defined.

\begin{proposition}\label{prp:unique}
If $G$ is a graph and $x$ an arbitrary vertex of $G$, then there exists a Grundy dominating sequence of $G$ that contains $x$. 
\end{proposition}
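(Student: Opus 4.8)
The plan is to mirror the reversal argument used in the proof of Proposition~\ref{prp:Zunique}, now exploiting that closed neighborhoods satisfy the symmetry $u\in N[w]\iff w\in N[u]$. I would start from an arbitrary $\ggr(G)$-sequence $S=(v_1,\ldots,v_k)$. If $x\in\widehat S$ there is nothing to prove, so assume $x\notin\widehat S$. Since $S$ has maximum length, $\widehat S$ is a dominating set of $G$ (any vertex not dominated by $\widehat S$ could be appended to $S$, contradicting maximality), so $x$ has a footprinter $v_j\in\widehat S$; that is, $x\in N[v_j]\setminus\bigcup_{\ell<j}N[v_\ell]$.

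For each $i\in[k]$ let $v_i'$ be some vertex footprinted by $v_i$, i.e.\ $v_i'\in N[v_i]\setminus\bigcup_{\ell<i}N[v_\ell]$; since $v_j$ footprints $x$, I may and do choose $v_j'=x$. The vertices $v_1',\ldots,v_k'$ are pairwise distinct: the footprinter of $v_i'$ along $S$ is $v_i$, and each vertex has a unique footprinter, so $v_i'=v_m'$ would force $v_i=v_m$.

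The key step is then to verify that $S'=(v_k',v_{k-1}',\ldots,v_1')$ is a closed neighborhood sequence. Fixing $i\in[k]$, the vertices preceding $v_i'$ in $S'$ are exactly $v_k',\ldots,v_{i+1}'$, so I must exhibit a vertex in $N[v_i']\setminus\bigcup_{m>i}N[v_m']$, and I claim $v_i$ works. Indeed $v_i'\in N[v_i]$ gives $v_i\in N[v_i']$ by symmetry of closed neighborhoods; and for any $m>i$, since $v_m'$ is footprinted by $v_m$ we have $v_m'\notin N[v_i]$ (because $i<m$), hence $v_i\notin N[v_m']$, again by symmetry. Thus $v_i$ is footprinted by $v_i'$ along $S'$, so $S'$ is a valid closed neighborhood sequence. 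As $S'$ has length $k=\ggr(G)$, it is a $\ggr(G)$-sequence, and it contains $x=v_j'$, which finishes the proof.

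I do not expect a serious obstacle here: once the reversal idea is in place the argument is routine, the only points requiring care being the two uses of symmetry of closed neighborhoods in the verification above (this symmetry is precisely what makes the reversal legitimate, exactly as in Proposition~\ref{prp:Zunique}) and the distinctness of the $v_i'$, which follows from uniqueness of footprinters. Unlike in Proposition~\ref{prp:Zunique}, no vertex needs to be excluded: if $x$ is isolated then $N[x]=\{x\}$, so $x$ must already lie in $\widehat S$ for $\widehat S$ to dominate $G$, and the first case applies.
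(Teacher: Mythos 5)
Your proof is correct and follows essentially the same route as the paper's: reverse the sequence of footprinted vertices $(v_k',\ldots,v_1')$ with $v_j'=x$ chosen for the footprinter $v_j$ of $x$, and use the symmetry of closed neighborhoods to check that each $v_i'$ footprints $v_i$ in the reversed sequence. The only difference is that you spell out the verification (distinctness of the $v_i'$ and the two uses of symmetry) in more detail than the paper does.
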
 
\begin{proof}
Let $S=(v_1,\ldots , v_k)$ be an arbitrary $\ggr(G)$-sequence of $G$. Suppose  that $x \notin \widehat{S}$. Since $S$ is a closed neighborhood sequence, each vertex $v_i \in S$ footprints at least one vertex $v_i' \in N[v_i]$. Note that since $v_i'$ is footprinted by $v_i$, vertex $v_i'$ is not adjacent to $v_{\ell}$ for any $\ell \in [i-1]$. Since $\widehat{S}$ is a dominating set of $G$, $x$ has at least one neighbor in $\widehat{S}$. Let $v_j \in S$ be the footprinter of $x$. Then $S'=(v_k',v_{k-1}',\ldots , v_1')$, where $v_j'=x$ is a closed neighborhood sequence of $G$ because for any $i \in [k]$, vertex $v_i'$ footprints $v_i$. Since $S'$ has length $k = \ggr(G)$ and it contains $x$, the proof is complete.  
\end{proof}

\begin{corollary}\label{cor:unique}
If $G$ is a graph, and ${\cal S}=\{S:\, S \textrm{ is a }\ggr(G) \textrm{-sequence of }G\}$, then
$$\bigcup_{S\in {\cal S}}{ \widehat{S}=V(G).}$$
\end{corollary}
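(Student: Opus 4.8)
The plan is to derive Corollary~\ref{cor:unique} as an immediate consequence of Proposition~\ref{prp:unique}. Since Proposition~\ref{prp:unique} guarantees, for \emph{every} vertex $x\in V(G)$, the existence of some $\ggr(G)$-sequence $S_x$ with $x\in\widehat{S_x}$, each vertex of $G$ lies in the vertex set of at least one member of $\cal S$; hence $V(G)\subseteq\bigcup_{S\in{\cal S}}\widehat{S}$. The reverse inclusion is trivial, as every $\widehat{S}$ is by definition a subset of $V(G)$. Therefore equality holds.

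Concretely, I would write: let $x\in V(G)$ be arbitrary. By Proposition~\ref{prp:unique} there is a $\ggr(G)$-sequence $S$ of $G$ with $x\in\widehat{S}$, and $S\in{\cal S}$ by definition of $\cal S$, so $x\in\bigcup_{S\in{\cal S}}\widehat{S}$. This shows $V(G)\subseteq\bigcup_{S\in{\cal S}}\widehat{S}$; combined with the obvious inclusion $\bigcup_{S\in{\cal S}}\widehat{S}\subseteq V(G)$, the claimed equality follows. One should also note the degenerate case where $G$ has no vertices at all, in which case both sides are empty and the statement holds vacuously; and more importantly, unlike the Z-Grundy analogue (Corollary~\ref{cor:Zunique}), there is no exceptional set of isolated vertices to exclude here, because an isolated vertex $v$ still satisfies $N[v]=\{v\}\neq\emptyset$ and so the one-term sequence $(v)$ is a legitimate closed neighborhood sequence, which is precisely why Proposition~\ref{prp:unique} covers \emph{all} vertices.

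There is essentially no obstacle in this argument: it is a one-line logical unwinding of the quantifier in Proposition~\ref{prp:unique} together with the definition of $\cal S$. The only thing worth double-checking is that Proposition~\ref{prp:unique} is stated for an arbitrary vertex (it is), so that no vertex is left uncovered; and that the family $\cal S$ is nonempty whenever $V(G)\neq\emptyset$, which follows since any single vertex forms a valid length-one closed neighborhood sequence, hence $\ggr(G)\geq 1>0$ and a maximum-length such sequence exists. With these remarks in place the proof is complete.
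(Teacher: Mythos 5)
Your proof is correct and matches the paper's intent exactly: the corollary is stated there without proof precisely because it is the immediate quantifier-unwinding of Proposition~\ref{prp:unique} that you describe. Your extra remarks (the empty-graph case and the contrast with the isolated-vertex exclusion in Corollary~\ref{cor:Zunique}) are accurate but not needed.
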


\begin{corollary}\label{cor:unique_characterization}
If $G$ is a connected graph of order at least $2$, then $G$ is not a unique Grundy domination graph.
\end{corollary}
\begin{proof}
Let $G$ be a connected graph of order at least 2. Suppose that $G$ is a unique Grundy domination graph. Let $S$ be an arbitrary $\ggr(G)$-sequence. Since $G$ is connected and non-trivial, $\ggr(G) \leq n(G)-1$ and hence there exists $x \in V(G)$ that is not contained in $S$. By Proposition~\ref{prp:unique}, there exists a $\ggr(G)$-sequence $S'$ of $G$ that contains $x$. Since $S \neq S'$, we get a contradiction. 
\end{proof}

\begin{theorem}
\label{thm:ggrAutounique}
A connected graph $G$ is an iso-unique Grundy domination graph if and only if $G$ is a complete graph. 
\end{theorem}
\begin{proof}
Let $G$ be a connected iso-unique Grundy domination graph, and assume that $G$ is not complete. Let $S=(v_1,\ldots,v_k)$ be a Grundy dominating sequence in $G$. Since $G$ is not complete, $k \geq 2$. Let $U'$ be the set of vertices of $G$ footprinted by $v_k$ (with respect to $S$) and let $U = V(G) \setminus U'$. Since $S$ is the  closed neighborhood sequence of maximum length, $U'$ is a clique. (Indeed, if $U'$ had two non-adjacent vertices $a$ and $b$, then $(v_1,\ldots,v_{k-1},a,b)$ would be a closed neighborhood sequence of length $\ggr(G)+1$, a contradiction.)
Since $G$ is connected, there exists $u \in U$ that has a neighbor $u' \in U'$. Moreover, $u$ is adjacent to all vertices from $U'$.  Since $u'$ has no neighbors in $\{v_1,\ldots ,v_{k-1}\}$, we have $u\neq v_i$ for any $i \in [k-1]$. Thus the sequences $S'$ and $S''$ that are obtained from $S$ by replacing $v_k$ by $u$ and $u'$, respectively, are closed neighborhood sequences. Note that $v_k$ can be equal to $u$ or $u'$, and thus it is possible that either $S=S'$ or $S=S''$. Since the set $\widehat{S''}$ induces more connected components then the set $\widehat{S'}$, it is clear that there is no automorphism of $G$ that maps $\widehat{S'}$ to $\widehat{S''}$, a contradiction. For the converse note that $\ggr(K_n)=1$, which together with the structure of $K_n$ yields that $K_n$ is iso-unique Grundy domination graph.
\end{proof}

%%%%%%%%%%%%%%%%%%%%%%%%%%%%%%%%%
%%%%%%%%%%%%%%%% ZERO FORCING
%%%%%%%%%%%%%%%%%%%%%%%%%%%%%%%%

\section{Unique zero forcing graphs}
\label{sec:ZF}

In this section, we consider iso-unique zero forcing graphs. We establish that these graphs are exactly the iso-unique Z-Grundy domination graphs, and characterize all trees in this class of graphs. The characterization (see Theorem~\ref{thm:characterization}) leads to a polynomial-time algorithm for recognizing iso-unique zero forcing trees. 

Zero forcing is a propagation model based on the following {\it activation rule}. If all neighbors of an active vertex $u$ except one neighbor $v$ are active, then $v$ becomes active. 
We say that $u$ forces $v$ and write $u \rightarrow v$. A set $S \subseteq V(G)$ is a {\it zero forcing set} of $G$ if initially only vertices  of $S$ are active and the propagation of activation rules enforces all vertices of $G$ to become active.  The zero forcing number $Z(G)$ is the minimum cardinality of a zero forcing set of $G$; see~\cite{AIM}.
A chronological list of forcings describes an order in which the non-active vertices are forced and by which vertices they are forced. A {\em forcing chain} is a maximal sequence of vertices $v_1, \ldots ,v_k$ such that $v_i \rightarrow v_{i+1}$ for all $i \in [k-1]$. Clearly, a vertex of a zero forcing set $S$ starts a forcing chain and each forcing chain yields an induced path in $G$, since one vertex of a forcing chain can force at most one vertex. Moreover, every vertex of $G$ belongs to exactly one forcing chain.

Next, we present a version of Grundy domination, the Z-Grundy domination, which is closely related to zero forcing. Compared to the condition~\eqref{eq:defGrundy} on Grundy domination, the condition of Z-Grundy domination is stricter in that a vertex in the sequence must footprint a vertex different from itself.   

A sequence $S=(v_1,\ldots,v_k)$ of vertices of a graph $G$ is a {\em a Z-sequence}, if for each $i\in [k]$: 

\begin{equation}
\label{eq:defZGrundy}
N(v_i) \setminus \bigcup_{j=1}^{i-1}N[v_j]\not=\emptyset.
\end{equation}

\noindent Vertices of a Z-sequence form a {\em Z-set}, and the maximum size of a Z-set in $G$ is the {\em Z-Grundy domination number}, $\ggrz(G)$, of $G$. A set $\widehat{S}$ of vertices that belong to a maximum Z-set $S$ is a {\em Z-Grundy dominating set}. The corresponding sequence is a $\ggrz(G)$-{\em sequence} or a {\em Z-Grundy dominating sequence} of $G$. If $S$ is a Z-sequence, we also use terms {\em Z-footprints, Z-footprinter}, meaning of which should be clear. It may be interesting to remark that $G$ may have isolated vertices in which case z-Grundy dominating set is not a dominating set of $G$.

The following result which connects zero forcing and Z-Grundy domination was proved in~\cite[Theorem 2.2]{bbgkkptv-2017}.

\begin{theorem}{\rm \cite{bbgkkptv-2017}}
If $G$ is a graph without isolated vertices, then 
$$\ggrz(G)+Z(G)=|V(G)|\,.$$
Moreover, the complement of a (minimum) zero-forcing set of $G$ is a (maximum) Z-set of $G$ and vice versa. 
\label{thm:ZFGr}
\end{theorem}

By Theorem~\ref{thm:ZFGr}, when investigating graphs with uniqueness property with respect to zero forcing or Z-Grundy domination, we can use any of the two definitions. Unique zero-forcing graphs were considered in~\cite{Barioli-2010}, where it was proved that no connected graph of order greater than $1$ has a unique zero forcing set. This also implies that there is no connected non-trivial graph $G$ that is a unique Z-Grundy domination graph. Thus only the weaker uniqueness condition should be studied further with respect to Z-Grundy domination.

Although $\ggrz(G) < n(G)$ holds for any graph $G$, there are many iso-unique zero forcing graphs already in the class of trees. The simplest examples are trees $T$ obtained from two disjoint stars $K_{1,n_{1}}$, $n_1 \geq 2$, with center $c_1$ and $K_{1,n_2}$, $n_2 \geq 2$, with center $c_2$ by adding the edge $c_1c_2$. See Fig.~\ref{fig:Z}, where such an example is depicted with $n_1=5$ and $n_2=2$. Note that vertices of any path $P_4$ in $T$ form a Z-Grundy dominating set. 

\begin{figure}[ht!]
\begin{center}
\begin{tikzpicture}[scale=0.52,style=thick]
\def\vr{4pt}
%% vertices defined %%
%M

%X
\path (0,0) coordinate (a);
\path (-4,-3) coordinate (b);
\path (-2,-3) coordinate (c);
\path (0,-3) coordinate (d);
\path (2,-3) coordinate (e);
\path (4,-3) coordinate (f);

\path (8,0) coordinate (g);
\path (7,-3) coordinate (h);
\path (9,-3) coordinate (i);

%Y

%% edges %%

\draw (b)--(a)--(c);
\draw (d)--(a)--(e);
\draw (f)--(a)--(g);
\draw (h)--(g)--(i);

%% vertices %%

\draw (a) [fill=black] circle (\vr);
\draw (b) [fill=black] circle (\vr);
\draw (c) [fill=white] circle (\vr);
\draw (d) [fill=white] circle (\vr);
\draw (e) [fill=white] circle (\vr);
\draw (f) [fill=white] circle (\vr);

\draw (g) [fill=black] circle (\vr);
\draw (i) [fill=black] circle (\vr);
\draw (h) [fill=white] circle (\vr);

\end{tikzpicture}
\end{center}
\caption{An iso-unique Z-Grundy domination graph; vertices of a $\ggr^Z$-set are shaded, while white vertices form a minimum zero forcing set.}
\label{fig:Z}
\end{figure}

In the rest of this section, we focus on iso-unique zero forcing trees. We start by presenting some more definitions needed in their study. 

A {\em path cover} of a tree $T$ is a set of vertex disjoint induced paths of $T$ that cover all vertices of $T$. A path cover $\mathcal{P}$ of $T$ is
{\em minimum} if no other path cover of $T$ has fewer paths than $\mathcal{P}$, and the  {\em path cover number} $P(T)$ is the number of paths in a minimum path cover. Let ${\mathcal{P}}=\{Q_1,\ldots , Q_{\ell}\}$ be a path cover of a tree $T$. An edge $e=xy$, where $x \in V(Q_i)$, $y \in V(Q_j)$, $i \neq j$, is a {\em connector edge} for $\mathcal{P}$ and the end-vertices $x$ and $y$ of this edge are {\em connector vertices} of ${\mathcal{P}}$. A connector vertex is  {\em interior} if it is an interior vertex of the path of $\mathcal{P}$ in which it is contained. A path cover $\mathcal{P}$ of $T$ is {\em interior} if every connector vertex in $\mathcal{P}$ is interior. 
A tree $T$ is a {\em generalized star} if it contains at most one vertex of degree more than 2. A {\em pendant generalized star} of a tree $T$ that is not a generalized star is an induced subgraph $K$ of $T$ such that there is exactly one vertex $v$ of $K$ with $\deg_T(v)=k+1 \geq 3$, $k$ connected components of $T-v$ are pendant paths and $K$ is a subgraph of $T$ induced by those $k$ pendant paths and $v$. The vertex $v$ is called the {\it mid} vertex of $K$.
%If $T$ is a generalized star, then $T$ is the only pendant generalized star of $T$. definicija pendant generalized star je le za drevesa, ki niso generalized star. 

A path $P$ with $V(P)=\{x_1,\ldots , x_k\}$ and $E(P)=\{x_ix_{i+1}:\, i \in [k-1]\}$ will be denoted by $P:x_1,\ldots ,x_k$. The path $x_k,x_{k-1},\ldots , x_1$ will be denoted by $P^{-1}$. Furthermore, given two vertices $x,y \in V(G)$ of a path $P$, an edge $yv\in E(G)$ and two vertices $v,u \in V(G)$ of a path $R$, we will denote by $xPy,vRu$ the $x,u$-walk in $G$ that starts in $x$ and follows $P$ until $y$, continues to $v$ and then follows $R$ until $u$.   

It is well known that $Z(T)=P(T)$ holds for any tree $T$; see~\cite{AIM, ghh-2021+}. Moreover, the set obtained from a path cover $\mathcal{P}$ of a tree $T$ by taking one end-vertex of each path $P \in \mathcal{P}$ is a zero forcing set of $T$. Conversely, if $S=\{v_1,\ldots ,v_k\}$ is a minimum zero forcing set of $T$, then there exists a minimum path cover ${\mathcal{P}} = \{Q_1,\ldots , Q_k\}$ of $T$ such that $v_i$ is an end-vertex of $Q_i$ for all $i \in [k]$. Indeed, for any $i \in [ k ]$, $Q_i$ can be the forcing chain that starts in $v_i$.  

The trees having a unique minimum path cover turn out to be important in the investigation of iso-unique zero forcing trees. In~\cite[Corollary 16]{HJ}, Hogben and Johnson characterized such trees as the trees having an interior path cover. 

\begin{proposition}{\rm \cite{HJ}}\label{prp:hogben} A minimum path cover $\cal P$ of a tree $T$ is the unique minimum path cover of $T$ if and only if $\cal P$ is an interior path cover. 
\end{proposition}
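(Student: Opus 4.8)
The plan is to prove the two implications separately. The direction ``$\mathcal P$ is not interior $\Rightarrow$ $\mathcal P$ is not the unique minimum path cover'' is a short local-exchange argument, while the converse is the substantial part.

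For the first direction, suppose $\mathcal P$ is a minimum path cover with a connector vertex $x$ that is an end-vertex of its path $Q_i$, say with connector edge $xy$ and $y\in V(Q_j)$, $j\ne i$. I would first observe that $y$ must be an interior vertex of $Q_j$: otherwise, since every path in a tree is induced, concatenating $Q_i$ and $Q_j$ along $xy$ produces an induced path, and replacing $Q_i,Q_j$ by it yields a path cover with one fewer path, contradicting minimality. So, writing $Q_j\colon a_1,\dots,a_m$ with $y=a_t$ and $1<t<m$, and $Q_i\colon x=b_1,\dots,b_p$, I replace $Q_i$ and $Q_j$ by the two paths $b_p,\dots,b_1,a_t,a_{t+1},\dots,a_m$ and $a_1,\dots,a_{t-1}$; both are paths in a tree, hence induced, so this produces a path cover $\mathcal P'$ with $|\mathcal P'|=|\mathcal P|$, hence minimum. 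It is distinct from $\mathcal P$, since $a_1,\dots,a_{t-1}$ is a proper subpath of $Q_j$ and hence is not a member of $\mathcal P$ (the members of $\mathcal P$ are vertex-disjoint, and $Q_j$ is the only one meeting $\{a_1,\dots,a_{t-1}\}$). This is the contrapositive of the implication ``unique $\Rightarrow$ interior''.

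For the converse I would first reformulate path covers combinatorially. Because $T$ is a tree, every path in $T$ is induced and every spanning subgraph of $T$ of maximum degree at most $2$ is a vertex-disjoint union of paths; hence the path covers of $T$ correspond bijectively to the edge sets $D\subseteq E(T)$ such that $T-D$ has maximum degree at most $2$, where the number of paths equals $|D|+1$ and the connector edges are exactly the edges of $D$. Thus minimum path covers correspond to minimum-size such $D$. Letting $B$ be the set of branch vertices (degree at least $3$) and $c(v)=\deg_T(v)-2\ge 1$ for $v\in B$, the constraint on $D$ reads $|N_T(v)\cap D|\ge c(v)$ for all $v\in B$. A routine check then shows that $\mathcal P$ is interior precisely when its set $D$ is \emph{tight}, i.e.\ $|N_T(v)\cap D|=c(v)$ for every $v\in B$, and every edge of $D$ joins two branch vertices: the connector vertices are the endpoints of the edges of $D$, such an endpoint is interior to its path iff it has degree $2$ in $T-D$ (which forces it to lie in $B$ with $|N_T(v)\cap D|=c(v)$), and a branch vertex meeting no edge of $D$ would have degree at least $3$ in $T-D$, which is impossible.

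Finally, suppose $\mathcal P$ is interior with set $D$, and let $\mathcal P'$ be an arbitrary minimum path cover with set $D'$. Counting edge--branch-vertex incidences,
$$\sum_{v\in B}|N_T(v)\cap D'| \;\ge\; \sum_{v\in B}c(v) \;=\; \sum_{v\in B}|N_T(v)\cap D| \;=\; 2|D|,$$
the last equality holding because every edge of $D$ has both endpoints in $B$, while $\sum_{v\in B}|N_T(v)\cap D'|\le 2|D'|=2|D|$. Hence equality holds throughout, which forces $D'$ to be tight and to consist only of edges between branch vertices. Thus $D$ and $D'$ are both subgraphs of the forest $T[B]$ in which every vertex $v$ has degree exactly $c(v)$, and a standard leaf-peeling argument (a forest-leaf with prescribed degree $1$ must use its unique edge, one with prescribed degree $0$ must avoid it, then recurse) shows a forest has at most one such subgraph. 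Therefore $D=D'$, so $\mathcal P=\mathcal P'$. I expect the crux to be this converse — reaching the reformulation and then pairing the incidence count with the uniqueness of a prescribed-degree subgraph in a forest; the first direction is routine.
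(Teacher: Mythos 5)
Your proof is correct, but note that the paper itself offers no proof of this proposition: it is imported verbatim from Hogben and Johnson \cite{HJ} (their Corollary 16), so there is no in-paper argument to compare against. Your easy direction is sound, including the necessary preliminary step that the other endpoint $y$ of the connector edge must be interior to $Q_j$ (else merging $Q_i$ and $Q_j$ along $xy$ would contradict minimality), after which the local exchange visibly produces a second minimum cover. The substantial direction rests on your reformulation of path covers of a tree as edge sets $D\subseteq E(T)$ with $\Delta(T-D)\le 2$, the number of paths being $|D|+1$; this is a genuine bijection because every path in a tree is induced and every non-path edge is a connector edge. Interiority of $\mathcal{P}$ then correctly translates into $D$ having all its edges inside the branch-vertex set $B$ and meeting each $v\in B$ in exactly $c(v)=\deg_T(v)-2$ edges (your observation that every branch vertex is forced to be a connector vertex closes the gap between ``tight at connector vertices'' and ``tight on all of $B$''). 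The double count $2|D'|\ge\sum_{v\in B}|\{e\in D':v\in e\}|\ge\sum_{v\in B}c(v)=2|D|=2|D'|$ then forces any competing minimum $D'$ to have the same two properties, and uniqueness of a degree-prescribed subgraph of the forest $T[B]$ by leaf peeling gives $D=D'$. This is a different route from the source: judging by the algorithm the paper describes, Hogben and Johnson argue structurally by repeatedly extracting pendant generalized stars, whereas your argument is a one-shot incidence count. What your approach buys is a short, self-contained and easily checkable proof of exactly this equivalence; what it does not give is the constructive peeling procedure that the paper separately relies on to actually build a minimum path cover in linear time.
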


We continue with a basic observation about minimum path covers in trees.

\begin{lemma}\label{l:OneIsInterior}
If $\mathcal{P}$ is a minimum path cover of a tree $T$ and $e=xy$ is a connector edge of $\mathcal{P}$, then at least one end-vertex of $e$ is an interior connector vertex. 
\end{lemma}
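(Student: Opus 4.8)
The plan is to argue by contradiction. Suppose $\mathcal{P} = \{Q_1, \ldots, Q_\ell\}$ is a minimum path cover of $T$ and $e = xy$ is a connector edge of $\mathcal{P}$ such that neither $x$ nor $y$ is an interior connector vertex; that is, $x$ is an end-vertex of its path $Q_i$ and $y$ is an end-vertex of its path $Q_j$, with $i \neq j$. The idea is to splice $Q_i$ and $Q_j$ together along the edge $e$ to produce a single induced path, which reduces the number of paths in the cover by one, contradicting minimality of $\mathcal{P}$.

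Concretely, write $Q_i : x = a_1, a_2, \ldots, a_p$ with $x$ as an endpoint, and $Q_j : y = b_1, b_2, \ldots, b_q$ with $y$ as an endpoint (here I am using the $P:x_1,\ldots,x_k$ notation introduced just before the lemma). Form the walk $W = a_p, a_{p-1}, \ldots, a_1 = x, y = b_1, b_2, \ldots, b_q$, i.e.\ $Q_i^{-1}$ followed by the edge $xy$ followed by $Q_j$. First I would check that $W$ is in fact a path: its vertex set is $V(Q_i) \cup V(Q_j)$, which are disjoint since $Q_i$ and $Q_j$ are distinct members of the path cover, so no vertex repeats, and consecutive vertices are adjacent in $T$ by construction. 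Next, and this is the crux, I would verify that $W$ is an \emph{induced} path of $T$. Since $T$ is a tree, it has no cycles, so the only way $W$ could fail to be induced is if some $a_s$ is adjacent to some $b_t$ with $\{a_s,b_t\} \neq \{x,y\}$; but then $T$ would contain the cycle consisting of the $a_s$–$x$ segment of $Q_i$, the edge $xy$, the $y$–$b_t$ segment of $Q_j$, and the edge $a_sb_t$ (these segments are internally disjoint because $V(Q_i) \cap V(Q_j) = \emptyset$), contradicting that $T$ is acyclic. Hence $W$ is an induced path.

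Finally, replacing $Q_i$ and $Q_j$ in $\mathcal{P}$ by the single path $W$ yields a path cover $\mathcal{P}' = (\mathcal{P} \setminus \{Q_i, Q_j\}) \cup \{W\}$ of $T$ with $\ell - 1$ paths, since $W$ covers exactly $V(Q_i) \cup V(Q_j)$ and the remaining paths are untouched and still vertex-disjoint from $W$. This contradicts the minimality of $\mathcal{P}$, so at least one of $x$, $y$ must be an interior connector vertex, as claimed.

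I expect the main obstacle to be the verification that $W$ is induced; everything else is essentially bookkeeping. The key point there is that $Q_i$ and $Q_j$ are themselves induced paths and are vertex-disjoint, so any "shortcut" chord between them together with the two path-segments and the connector edge $e$ would close a cycle in $T$ — impossible. One should be slightly careful to note that the cycle so produced is genuine (has length at least $3$ and uses distinct vertices), which again follows from the vertex-disjointness of $Q_i$ and $Q_j$ and from $\{a_s,b_t\}\neq\{x,y\}$.
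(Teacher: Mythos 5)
Your proposal is correct and follows essentially the same route as the paper: assume both connector vertices are end-vertices of their paths and splice the two paths along $e$ into one, contradicting the minimality of $\mathcal{P}$. Your extra verification that the spliced walk is an induced path (using acyclicity of $T$) is a detail the paper leaves implicit, but the core argument is identical.
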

\begin{proof}
Suppose that both connector vertices $x \in V(Q_i)$ and $y\in V(Q_j)$ are end-vertices of $Q_i$ and $Q_j$, respectively. Let $Q_i:x,x_2,\ldots ,x_{\ell}$, and $Q_j:y,y_2,\ldots , y_k$. Then ${\mathcal{P}}'=({\mathcal{P}} \setminus \{Q_i, Q_j\})\cup \{P\}$, where $P:y_kQ_j^{-1}y,xQ_ix_{\ell}$, is a path cover containing less paths than $\mathcal{P}$, a contradiction. 
\end{proof}

\begin{lemma}\label{l:endVerticesAreLeafs}
If $T$ is an iso-unique zero forcing tree and $\mathcal{P}$ is a minimum path cover of $T$, then both end-vertices of any $P \in {\mathcal{P}}$ are leaves of $T$.
\end{lemma}
\begin{proof}
Suppose first that $P \in \mathcal{P}$ is an $x,y$-path with $\deg_T(x)=1$ and $\deg_T(y) > 1$. Since $\mathcal{P}$ is a minimum path cover, there exist minimum zero forcing sets $S$ and $S'$ such that $S\setminus S'=\{x\}$ and $S'\setminus S = \{y\}$. This is a contradiction, since there is clearly no automorphism of $T$ that maps $S$ to $S'$.

Suppose now that $P \in \mathcal{P}$ is an $x,y$-path with $\deg_T(x) > 1$ and $\deg_T(y) > 1$ (note that $x=y$ is also possible). Since $x$ is not a leaf of $T$, it has a neighbor $x_1 \notin V(P)$. Let $P_1: x_{1,1},x_{1,2},\ldots , x_{1,n_1}$ be the path from $\mathcal{P}$ that contains $x_1$. By Lemma~\ref{l:OneIsInterior}, $x_1=x_{1,j_1}$ for $j_1 \in \{2,\ldots , n_1-1\}$. Now, we will find a sequence of $\ell\ge 1$ paths. If $x_{1,1}$ is a leaf of $T$, then $\ell=1$, otherwise $x_{1,1}$ has a neighbor $x_2 \notin V(P_1)$. Let $P_2: x_{2,1},x_{2,2},\ldots , x_{2,n_2}$ be the path from $\mathcal{P}$ that contains $x_2$. By Lemma~\ref{l:OneIsInterior}, $x_2=x_{2,j_2}$ for $j_2 \in \{2,\ldots , n_2-1\}$. If $x_{2,1}$ is a leaf of $T$, then $\ell=2$,  otherwise we continue with this procedure until the path $P_{\ell}:x_{\ell,1},\ldots , x_{\ell,n_l}$, which is the path containing a neighbor $x_\ell$ of $x_{\ell-1,1} \in V(P_{\ell-1})$ and $\deg_T(x_{\ell,1})=1$. Then ${\mathcal{P}}'=({\mathcal{P}}\setminus \{P, P_1, \ldots , P_\ell\}) \cup \{P',  P_1' , \ldots , P_\ell'\}$, where 
$$P': x_{\ell,1}P_{\ell}x_{\ell},x_{\ell-1,1}P_{\ell-1}x_{\ell-1},x_{\ell-2,1},\ldots , x_{1,1}P_1x_1,xPy,\textrm{ and}$$
$$P_i':x_{i,j_i+1}P_ix_{i,n_i}, \textrm{ for all } i\in [\ell],$$
is clearly a minimum path cover of $T$. Since $\mathcal{P}$ is a minimum path cover, there is a minimum zero forcing set $S$ of $T$ with $$S \cap (V(P) \cup V(P_1) \cup \ldots \cup V(P_\ell))=\{x_{1,n_1},x_{2,n_2},\ldots , x_{\ell,n_\ell},x\}.$$ As ${\mathcal{P}}'$ is also a minimum path cover, $S'=(S\setminus \{x\}) \cup \{x_{\ell,1}\}$ is a zero forcing set. Since $S'$ has more leaves than $S$, there is no automorphism of $T$ that maps $S$ to $S'$, which is a contradiction. Thus, $\deg_T(x)=\deg_T(y)=1$.
\end{proof}

\begin{lemma}\label{l:interiorCover}
Let $T$ be an iso-unique zero forcing tree and let $\mathcal{P}$ be a minimum path cover of $T$. If $e=xy$ is a connector edge for $\mathcal{P}$, then connector vertices $x$ and $y$ are interior or one of them is interior and the other is the only vertex of a path from $\mathcal{P}$. 
\end{lemma}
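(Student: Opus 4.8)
The plan is to argue by contradiction: suppose $e=xy$ is a connector edge of the minimum path cover $\mathcal{P}$ of $T$, with $x\in V(Q_i)$, $y\in V(Q_j)$, $i\ne j$, and that the conclusion fails. By Lemma~\ref{l:OneIsInterior}, at least one of $x,y$ is an interior connector vertex of its path; say $x$ is interior in $Q_i$. So the only way the conclusion can fail is that $y$ is an end-vertex of $Q_j$ and $Q_j$ has more than one vertex. Write $Q_j:y,y_2,\ldots,y_k$ with $k\ge 2$. The idea is to ``reroute'' the cover by re-attaching the tail of $Q_i$ starting just past $x$ to the path $Q_j$ through the connector edge $xy$, producing a second minimum path cover that, together with $\mathcal{P}$, forces two minimum zero forcing sets with a different number of leaves (or more simply, a different multiset of end-vertex degrees), contradicting iso-uniqueness via Lemma~\ref{l:endVerticesAreLeafs}.

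First I would set up the rerouting. Let $Q_i:a_1,\ldots,a_p$ with $x=a_r$ for some $r\in\{2,\ldots,p-1\}$ (interior). Split $Q_i$ at $x$ into the two subpaths $a_1,\ldots,a_r$ and $a_r,\ldots,a_p$; since $T$ is a tree and $xy$ is the only edge between $V(Q_i)$ and $V(Q_j)$, concatenating one of these subpaths to $Q_j$ along $xy$ yields an induced path of $T$. Concretely, form $\mathcal{P}'=(\mathcal{P}\setminus\{Q_i,Q_j\})\cup\{R,R'\}$ where $R$ is the induced path $y_kQ_jy, xQ_ia_p$ (i.e.\ $y_k,\ldots,y_2,y,x=a_r,a_{r+1},\ldots,a_p$) and $R'$ is the induced path $a_1,\ldots,a_r=x$. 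One checks $R$ and $R'$ are vertex-disjoint induced paths covering $V(Q_i)\cup V(Q_j)$, so $\mathcal{P}'$ is again a path cover with the same number of paths, hence minimum. Now $\mathcal{P}$ admits a minimum zero forcing set $S$ containing the end-vertices $a_1$ (or $a_p$) of $Q_i$ and $y_k$ of $Q_j$; by Lemma~\ref{l:endVerticesAreLeafs} both of these must be leaves of $T$. Similarly $\mathcal{P}'$ admits a minimum zero forcing set $S'$ containing the end-vertices of $R$ and $R'$; but the end-vertex $x=a_r$ of $R'$ is an interior vertex of $Q_i$, so $\deg_T(x)\ge 2$, and $x$ is not a leaf of $T$ (it had neighbors $a_{r-1},a_{r+1}$ inside $Q_i$). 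This contradicts Lemma~\ref{l:endVerticesAreLeafs} applied to the minimum path cover $\mathcal{P}'$.

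The main obstacle I anticipate is the bookkeeping needed to guarantee that the rerouted object is genuinely a set of \emph{induced} vertex-disjoint paths and that the resulting zero forcing set realizing $\mathcal{P}'$ indeed picks up $x$ as an end-vertex — one must be careful about which of the two pieces of the split $Q_i$ gets absorbed into $Q_j$ (choose the piece not containing the prescribed leaf end-vertex used in $S$, so that $x$ becomes a free end-vertex of the leftover piece $R'$), and one must make sure that no smaller path cover is accidentally produced, which would break the ``minimum'' hypothesis rather than give the intended contradiction. A secondary, milder point is to handle the degenerate case $x=a_1$ or $x=a_p$ correctly — but that case cannot occur here precisely because we assumed $x$ is the interior connector vertex, so the argument only needs the case $r\in\{2,\ldots,p-1\}$, and the symmetric situation where $y$ is the interior one and $x$ the singleton is exactly the stated exception, requiring no argument.
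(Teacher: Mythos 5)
There is a genuine flaw in your construction, and also a missed shortcut. The flaw: your two replacement paths are not vertex-disjoint. You define $R$ as $y_k,\ldots,y_2,y,x=a_r,a_{r+1},\ldots,a_p$ and $R'$ as $a_1,\ldots,a_r=x$, so the vertex $x=a_r$ lies in both, and $\mathcal{P}'$ is not a path cover. This is not a cosmetic slip: your entire contradiction rests on $x$ being an end-vertex of $R'$, but any path that uses the connector edge $xy$ to pass from $Q_j$ into $Q_i$ must contain $x$, so $x$ can never be ``a free end-vertex of the leftover piece.'' If you repair the split by setting $R'=a_1,\ldots,a_{r-1}$, the candidate non-leaf end-vertex becomes $a_{r-1}$, which works only when $r\ge 3$; in the corner case where $Q_i\cong P_3$ with $x$ its center, both possible leftover pieces are single leaves and your rerouting yields no contradiction at all. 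So as written the argument does not close.

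The missed shortcut is that no rerouting is needed: the contradiction is already sitting in $\mathcal{P}$ itself. In your own setup $y$ is an end-vertex of $Q_j:y,y_2,\ldots,y_k$ with $k\ge 2$, so $y$ has the neighbor $y_2$ inside $Q_j$ and the neighbor $x$ outside it, whence $\deg_T(y)\ge 2$ and $y$ is not a leaf of $T$. Lemma~\ref{l:endVerticesAreLeafs} states that \emph{every} end-vertex of \emph{every} path of a minimum path cover of an iso-unique zero forcing tree is a leaf, applied to $\mathcal{P}$ and $Q_j$ this is an immediate contradiction. This two-line argument is exactly the paper's proof; you quoted Lemma~\ref{l:endVerticesAreLeafs} for $a_1$ and $y_k$ but did not apply it to the one end-vertex, $y$, that finishes the proof.
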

\begin{proof}
By Lemma~\ref{l:OneIsInterior}, at least one connector vertex, say $x$, is interior. For the purpose of contradiction assume that $y$ is an end-vertex of a path $P' \in \mathcal{P}$ of length at least 2. Then $\deg_T(y) \geq 2$, which is a contradiction with Lemma~\ref{l:endVerticesAreLeafs}. 
\end{proof}

\begin{lemma}\label{l:P3}
Let $T$ be an iso-unique zero forcing tree and let $\mathcal{P}$ be a minimum path cover of $T$. If $P \in \mathcal{P}$ contains only one vertex $x$, then the neighbor of $x$ is the center of the path $R \in \mathcal{P}$ of length 2, that is, $R \cong P_3$.
\end{lemma}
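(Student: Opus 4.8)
The plan is to combine Lemma~\ref{l:OneIsInterior} and Lemma~\ref{l:endVerticesAreLeafs} with an explicit swap of path covers, just as in the proof of Lemma~\ref{l:endVerticesAreLeafs}. Let $P\in\mathcal{P}$ be the singleton path $P=(x)$, and let $w$ be the unique neighbor of $x$ in $T$. Let $R\in\mathcal{P}$ be the path of $\mathcal{P}$ that contains $w$; write $R:y_1,y_2,\ldots,y_m$. Since $xw$ is a connector edge for $\mathcal{P}$ and $x$ is an end-vertex of its path, Lemma~\ref{l:OneIsInterior} forces $w$ to be an interior vertex of $R$, so $w=y_t$ for some $t\in\{2,\ldots,m-1\}$; in particular $m\ge 3$. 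By Lemma~\ref{l:endVerticesAreLeafs}, both $y_1$ and $y_m$ are leaves of $T$.

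The key step is to show $m=3$, i.e.\ that $t=2$ (and, by symmetry between the two ends of $R$, that $R$ has no vertex beyond $y_{m}=y_3$). First I would rule out the case where $w$ has an interior neighbor on the ``short'' side: suppose $t\ge 3$. Then consider the path cover obtained from $\mathcal{P}$ by replacing $P$ and $R$ with $\{R_1,R_2\}$ where $R_1:y_1,\ldots,y_{t},x$ (attaching $x$ at the $y_{t}$-end of the initial segment) and $R_2:y_{t+1},\ldots,y_m$; this is again a path cover of the same size, hence a minimum path cover. Using that $\mathcal{P}$ is minimum, pick a minimum zero forcing set $S$ realized by $\mathcal{P}$ with $x\in S$ (so $x$ is a forcing chain of length $1$) and $y_1\in S$; realizing $\{R_1,R_2\}$ instead gives the zero forcing set $S'=(S\setminus\{x\})\cup\{y_{t+1}\}$. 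Now count leaves of $T$ inside each set: $x$ is a leaf, whereas $y_{t+1}$ is not a leaf (it is an interior vertex of $R$, since $t+1\le m-1$ when $t\le m-2$; the boundary case $t=m-1$ must be handled separately, but then $y_{t+1}=y_m$ is a leaf and instead $x$ and $y_m$ would need to be swapped while the rest of $S$ differs only on $R$, and one checks the two sets differ in the multiset of degrees of chosen vertices — more carefully, in that case $R=y_1,\ldots,y_{m}$ with $w=y_{m-1}$, and the alternative cover $R_1:y_1,\ldots,y_{m-1},x$, $R_2:(y_m)$ produces a singleton path at a leaf, which by the same leaf-count/degree argument contradicts iso-uniqueness because $S$ uses $x$ and a non-leaf end of $R$ while $S'$ uses two leaves). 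In every sub-case $S$ and $S'$ have a different number of leaves (equivalently, different degree multiset of their elements), so no automorphism of $T$ can map one to the other, contradicting that $T$ is iso-unique zero forcing. Hence $t=2$.

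It remains to see that $R$ has exactly three vertices. We have $w=y_2$, so $R:y_1,y_2,\ldots,y_m$ with $y_1$ a leaf. If $m\ge 4$, apply the same swap at the other end: replace $P$ and $R$ by $R_1':y_m,y_{m-1},\ldots,y_2,x$ and the singleton path on $y_1$? That again creates a singleton at a leaf. Better: since $t=2$, split $R$ at $w=y_2$ as $R_1:x,w,y_1$ (a $P_3$ with center $w$) and $R_2:y_3,\ldots,y_m$; this is a minimum path cover, and it is interior exactly when the original issues vanish — but the cleanest finish is to observe that if $m\ge 4$ then $y_3,\ldots,y_m$ has length $\ge 2$ with end-vertex $y_3$ of degree $\ge 2$ in $T$ (it is adjacent to $y_2$ and $y_4$), so applying Lemma~\ref{l:endVerticesAreLeafs} to this new minimum path cover yields a contradiction. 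Therefore $m=3$, $w=y_2$ is the center of $R\cong P_3$, as claimed. The main obstacle is the bookkeeping of the boundary sub-cases ($t=m-1$, and the interaction when the swap itself produces a new singleton path), which must be organized so that in each one the two resulting zero forcing sets are visibly non-isomorphic as vertex subsets of $T$; the leaf-count (or element-degree-multiset) invariant is the uniform tool that settles all of them.
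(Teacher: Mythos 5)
Your overall strategy is the paper's: re-cover $V(P)\cup V(R)$ by two induced paths, one of which absorbs $x$, and then contradict Lemma~\ref{l:endVerticesAreLeafs} (equivalently, run a leaf-count comparison on the two minimum zero forcing sets arising from the old and new minimum path covers). Your step (2) and the main case of step (1) (namely $3\le t\le m-2$) are correct and match the paper's proof. The genuine gap is the sub-case $t=m-1\ge 3$. There your split produces $R_1:y_1,\ldots,y_{m-1},x$ and the singleton $R_2=(y_m)$, and this new cover violates nothing: both ends of $R_1$ and the unique vertex of $R_2$ are leaves of $T$, so Lemma~\ref{l:endVerticesAreLeafs} is satisfied, and the two zero forcing sets you compare both consist entirely of leaves, so the leaf-count (degree-multiset) invariant cannot separate them. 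Your stated justification --- that ``$S$ uses $x$ and a non-leaf end of $R$'' --- is false, since by Lemma~\ref{l:endVerticesAreLeafs} both $y_1$ and $y_m$ are leaves of $T$; hence no contradiction has actually been derived in this sub-case, and it is not obviously impossible that an automorphism exchanges the two sets there.

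The repair is exactly the paper's one-line move, which also makes your two-step decomposition (first $t=2$, then $m=3$) unnecessary: assume $m\ge 4$; since $w=y_t$ is interior, the two sides $y_1,\ldots,y_{t-1}$ and $y_{t+1},\ldots,y_m$ together have $m-1\ge 3$ vertices, so at least one side has at least two vertices. Split off \emph{that} side as a new path and attach $x$ to the other side. The split-off path then has an end-vertex ($y_{t-1}$ or $y_{t+1}$) adjacent to two vertices of $R$, hence a non-leaf of $T$, and Lemma~\ref{l:endVerticesAreLeafs} applied to this new minimum path cover gives the contradiction uniformly. In particular, in your problematic sub-case $t=m-1\ge 3$ the correct split is $R_1':x,y_{m-1},y_m$ together with $R_2':y_1,\ldots,y_{m-2}$, whose end-vertex $y_{m-2}$ is not a leaf.
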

\begin{proof}
By Lemma~\ref{l:endVerticesAreLeafs}, $x$ is a leaf of $T$. Let $y$ be the neighbor of $x$ in $T$ and let $R: y_1,y_2, \ldots ,y_k$ be the path of $\mathcal{P}$ that contains $y$. Lemma~\ref{l:OneIsInterior} implies that $y$ is an interior vertex of $R$ and hence $y=y_i$ for some $i \in \{2,\ldots , k-1\}$. By Lemma~\ref{l:endVerticesAreLeafs}, $y_1 $ and $y_k$ are leafs of $T$. For the purpose of contradiction assume that $|V(R)| \geq 4$. Hence at least one of the subpaths of $R$, the $y_1,y_{i-1}$-subpath or the $y_{i+1},y_k$-subpath, has length at least 2. Without loss of generality assume that $i>2$ (otherwise we change the roles of both parts of $R$). Let $P': x,y_iRy_k$ and $R':y_1Ry_{i-1}$. Then ${\mathcal{P}}'=({\mathcal{P}} \setminus \{P,R\}) \cup \{P',R'\}$ is also a minimum path cover of $T$. Since ${\mathcal{P}}'$ contains the path $R'$ with the end-vertex $y_{i-1}$ that is not a leaf of $T$, we get a contradiction with Lemma~\ref{l:endVerticesAreLeafs}.  
\end{proof}

%Let $f:V(T) \to V(T)$ be an automorphism of $T$ and $C$ a center of $T$. Then $f$ maps $C$ to $C$. Even more if  a center of $T$ is one vertex $c$, then any automorphism on $T$ fixes $c$. If a center of $T$ is a pair $c_1,c_2$ of adjacent vertices then either $f(c_i)=c_i$ for any $i \in \{1,2\}$ or $f(c_i)=c_{3-i}$ for any $i \in \{1,2\}$.  Ne vem, kje rabimo tole s centrom... 

The following result is based on the fact that minimum zero forcing sets in a tree can be obtained from minimum path covers by taking one end-vertex from each of the paths in a path cover of $T$. Given a path cover $\cal P$, exchanging two end-vertices of a path $P\in {\cal P}$ and keeping end-vertices of other paths in $\cal P$ fixed, we get two zero forcing sets that differ only in one vertex. This yields the existence of an automorphism of a tree, which leads to the following result.

\begin{lemma}\label{l:autP}
Let $T$ be an iso-unique zero forcing tree and let $\mathcal{P}$ be a minimum path cover of $T$. Let $P:x_1,\ldots , x_{\ell} \in \mathcal{P}$, and let $p=\frac{\ell+1}{2}$ if $\ell$ is odd, and $p=\frac{\ell}{2}$ if $\ell$ is even. 
\begin{enumerate}[(a)]
\item If $\ell$ is odd, then the connected components of $T-x_{p}$ that contain $x_{p-1}$ and $x_{p+1}$, respectively, are isomorphic. 
\item If $\ell$ is even, then the connected components of $T-x_{p}x_{p+1}$ that contain $x_{p}$ and $x_{p+1}$, respectively, are isomorphic.
\end{enumerate}
Moreover, there is an automorphism of $G$ that maps $x_1$ to $x_{\ell}$ and $x_{\ell}$ to $x_1$.  
\end{lemma}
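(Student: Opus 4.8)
The plan is to exploit the correspondence between minimum path covers and minimum zero forcing sets of a tree, together with the iso-uniqueness hypothesis, to produce an explicit automorphism. Fix the path cover $\mathcal{P}$ and the path $P:x_1,\ldots,x_\ell$. As recalled before the statement, for each path $Q\in\mathcal{P}$ we may pick either end-vertex to form a minimum zero forcing set, and all such choices give minimum zero forcing sets of $T$; moreover by Lemma~\ref{l:endVerticesAreLeafs} every end-vertex of every path of $\mathcal{P}$ is a leaf of $T$. So let $S$ be the minimum zero forcing set obtained from $\mathcal{P}$ by taking the endpoint $x_1$ of $P$ and a fixed endpoint of every other path, and let $S'$ be obtained in exactly the same way except taking $x_\ell$ instead of $x_1$. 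Then $S$ and $S'$ differ in exactly one vertex: $S\setminus S'=\{x_1\}$, $S'\setminus S=\{x_\ell\}$. Since $T$ is iso-unique zero forcing, there is an automorphism $\phi$ of $T$ with $\phi(S)=S'$. This $\phi$ fixes $S\cap S'$ setwise and must send $x_1$ to $x_\ell$.

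Next I would argue $\phi(x_1)=x_\ell$ precisely (not merely $\phi(x_1)\in S'\setminus S$, which is already a singleton, so this is immediate), and then track where $\phi$ sends the whole path $P$. The key point is that $P$ is an induced path whose internal vertices are the only vertices of $T$ not covered as endpoints, and $\phi$ permutes the endpoint-leaves of $\mathcal{P}$ among themselves only up to the constraint $\phi(S)=S'$; in particular $\phi$ fixes each of the chosen endpoints of the other paths (they lie in $S\cap S'$) — here one should be slightly careful, as $\phi$ could a priori permute those fixed endpoints among themselves, but since it must restrict to a bijection $S\to S'$ and $S,S'$ agree off $\{x_1,x_\ell\}$, we only know $\phi(S\cap S')=S\cap S'$ as sets. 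To pin down the path I would instead use a direct structural argument: since $x_1,x_\ell$ are leaves, $\phi$ maps the unique leaf-to-leaf geodesic structure around $x_1$ to that around $x_\ell$. Concretely, consider the second path cover $\mathcal{P}'$ obtained from $\mathcal{P}$ by reversing $P$ (as an ordered path) — this is the same set of paths, so the same cover; the asymmetry we need comes not from $\mathcal{P}$ but from comparing the two choices of endpoint. The cleanest route: let $T_1$ be the component of $T-x_1x_2$ containing $x_2,\ldots,x_\ell$, and similarly for the other end. An automorphism sending $x_1\mapsto x_\ell$ must send the pendant edge at $x_1$ to the pendant edge at $x_\ell$, hence $x_2\mapsto x_{\ell-1}$, and inductively (walking along $P$, using that each $x_i$ for $1<i<\ell$ has its neighbors determined and $\phi$ preserves adjacency) $\phi(x_i)=x_{\ell+1-i}$ for all $i$. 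In particular $\phi(x_p)=x_p$ when $\ell$ is odd, and $\phi$ swaps $x_p\leftrightarrow x_{p+1}$ when $\ell$ is even.

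From $\phi(x_i)=x_{\ell+1-i}$ the two claimed isomorphisms follow. For $\ell$ odd: $\phi$ fixes $x_p$, hence induces an automorphism of $T-x_p$; it swaps $x_{p-1}$ and $x_{p+1}$, so it maps the component of $T-x_p$ containing $x_{p-1}$ onto the one containing $x_{p+1}$, giving the isomorphism of (a). For $\ell$ even: $\phi$ maps the edge $x_px_{p+1}$ to itself (swapping its ends), hence induces an automorphism of $T-x_px_{p+1}$ interchanging the two sides, giving (b). The final ``moreover'' is exactly $\phi(x_1)=x_\ell$, $\phi(x_\ell)=x_1$, which we have already.

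The main obstacle is the inductive step $\phi(x_i)=x_{\ell+1-i}$: one must be sure the automorphism is forced to ``walk the path'' rather than, say, folding it somewhere in the middle or diverting into a side branch. The resolution is that for $1<i<\ell$, $x_i$ is an internal vertex of the \emph{induced} path $P$, so all but possibly side-branch neighbors of $x_i$ are $x_{i-1}$ and $x_{i+1}$, and once $\phi(x_{i-1})$ is known to be $x_{\ell+2-i}$ (an internal vertex of $P$ or the endpoint), the image $\phi(x_i)$ is a neighbor of $x_{\ell+2-i}$ on $P$ distinct from $\phi(x_{i-2})=x_{\ell+3-i}$, forcing $\phi(x_i)=x_{\ell+1-i}$; the base cases $\phi(x_1)=x_\ell$ and $\phi(x_2)=x_{\ell-1}$ come from $x_1,x_\ell$ being leaves with unique neighbors. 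Degenerate small cases ($\ell=1$, $\ell=2$) should be checked separately but are trivial ($\ell=1$ makes the ``moreover'' vacuous with $\phi=\mathrm{id}$ as one option, though strictly one takes $x_1=x_\ell$; $\ell=2$ gives (b) with the two components being the two pendant leaves, trivially isomorphic).
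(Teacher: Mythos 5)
There is a genuine gap at the very first step, and it is precisely the difficulty that the paper's proof is designed to overcome. From $\phi(S)=S'$ with $S\setminus S'=\{x_1\}$ and $S'\setminus S=\{x_\ell\}$ you conclude that $\phi(x_1)=x_\ell$ ``immediately''. But $\phi(S)=S'$ only says that $\phi$ maps the \emph{set} $S$ onto the \emph{set} $S'$; it does not force $\phi(x_1)\in S'\setminus S$. It is entirely possible that $\phi(x_1)$ lands in $S\cap S'$ while some other vertex of $S\cap S'$ is sent to $x_\ell$ (so your fallback claim $\phi(S\cap S')=S\cap S'$ is not known either). A concrete instance is the double star built from two copies of $K_{1,2}$ with centers $c_1,c_2$ joined by an edge, with $\mathcal{P}=\{(a,c_1,b),(d,c_2,e)\}$, $S=\{a,d\}$, $S'=\{b,d\}$: the automorphism that swaps the two stars via $a\mapsto d$, $d\mapsto b$, $b\mapsto e$, $e\mapsto a$, $c_1\leftrightarrow c_2$ maps $S$ onto $S'$ but sends $x_1=a$ to $d$, not to $x_\ell=b$. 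Since iso-uniqueness only guarantees \emph{some} automorphism carrying $S$ to $S'$, you cannot assume it is one that fixes the path $P$ setwise, and everything downstream (the pointwise reversal $\phi(x_i)=x_{\ell+1-i}$, hence (a) and (b)) collapses.

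The paper's proof closes exactly this hole by an iteration/composition argument: it shows $\alpha$ cannot fix $x_{p-1}$ (a counting argument on $|S\cap V(C_1)|$ versus $|S'\cap V(C_1)|$), so $\alpha$ carries the component $C_1$ of $T-x_p$ containing $x_{p-1}$ onto an isomorphic component $C_1^{(1)}$ hanging at $\alpha(x_p)$, whose image leaf $\alpha(x_1)$ still lies in $S\cap S'$ unless $C_1^{(1)}=C_2$; repeating and using finiteness produces a chain $C_1=C_1^{(0)},C_1^{(1)},\ldots,C_1^{(k+1)}=C_2$ of pairwise isomorphic components, and composing the isomorphisms yields an automorphism that does swap $x_1$ and $x_\ell$. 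Your proposal contains no substitute for this step. A secondary (now moot) weak point: even granting $\phi(x_1)=x_\ell$, your inductive claim that $\phi$ must ``walk the path'' is asserted rather than proved at internal vertices of degree at least $3$, where $\phi(x_i)$ is only known to be \emph{some} neighbor of $x_{\ell+2-i}$ other than $x_{\ell+3-i}$, not necessarily the one on $P$.
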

\begin{proof}
Let $T$ be an iso-unique zero forcing tree and let $\mathcal{P}$ be a minimum path cover of $T$. Assume first that $\ell$ is odd. Let $S$ be a minimum zero forcing set of $T$, and assume without loss of generality that $x_1\in S$ and $x_{\ell}\notin S$. (Note that a minimum zero forcing set of $T$ can be taken by choosing a leaf of every path in $\mathcal{P}$.)  On the other hand, $S'=(S-\{x_1\})\cup\{x_{\ell}\}$ is also a minimum zero forcing set of $T$, and therefore there is an automorphism $\alpha$ of $T$ that maps $S$ to $S'$. Let $C_1=C_1^{(0)}$ be the component of $T-x_{p}$ that contains $x_{p-1}$ and $C_2$ be the component of $T-x_{p}$ that contains $x_{p+1}$. If $\alpha$ fixes $x_{p-1}$, then $|S\cap V(C_1)|=|S'\cap V(C_1)|$, which is a contradiction, since $S\cap V(C_1)=(S'\cap V(C_1))\cup\{x_1\}$. Therefore, let $\alpha(x_{p-1})=x_{p-1}^{(1)}$, where $x_{p-1}^{(1)}$ is a neighbor of $\alpha(x_p).$ Let $x_p^{(1)}=\alpha(x_p)$ and note that $x_p^{(1)}=x_p$ is not excluded. Let $C_1^{(1)}=\alpha(C_1)$. Note that $C_1^{(1)}$ is the component of $T-x_{p}^{(1)}$ that contains $x_{p-1}^{(1)}$ and is isomorphic to $C_1$. Furthermore, $\alpha(x_1)=x_1^{(1)}$, which is a leaf in $C_1^{(1)}$. Since $S-\{x_1,x_{\ell}\}=S'-\{x_1,x_{\ell}\}$, we infer that $x_1^{(1)}\in S\cap S'$, unless $C_1^{(1)}=C_2$. Now, using the same arguments we infer that $\alpha$ maps $C_1^{(1)}$ to  $C_1^{(2)}$, which is isomorphic to $C_1^{(1)}$. In particular, $\alpha(x_1^{(1)})=x_1^{(2)}$, which is a leaf in $C_1^{(2)}$. Since $S-\{x_1,x_{\ell}\}=S'-\{x_1,x_{\ell}\}$, we infer that $x_1^{(2)}\in S\cap S'$, unless $C_1^{(2)}=C_2$. By the same reasoning, for any integer $i\geq 0$, $\alpha(C_1^{(i)})=C_1^{(i+1)}$ and $\alpha(x_1^{(i)})=x_1^{(i+1)} \in S \cap S'$, unless $C_1^{(i+1)}=C_2$. Since $S \cap S'$ is finite, there exists $i \geq 0$ such that $C_1^{(i+1)}=C_2$. Hence, $\alpha^{i+1}(C_1)=C_2$ and thus $C_1$ is isomorphic to $C_2$.  

The case (b), when $\ell$ is even can be proved in a similar way.   
\end{proof}

In the above lemmas we presented several necessary conditions for iso-unique zero forcing trees. In the next result we prove that all those conditions together yield a  sufficient condition for $T$ being an iso-unique zero forcing tree.

\begin{theorem}\label{thm:characterization}
A tree $T$ is an iso-unique zero forcing tree if and only if the following conditions are satisfied for every minimum path cover $\cal P$ of $T$:
\begin{enumerate}[(i)]
\item Both end-vertices of a path $P\in {\mathcal{P}}$ are leaves of $T$.
\item If $e$ is a connector edge of $\cal P$, then the connector vertices of $e$ are either both interior or one of them is interior vertex of a path in $\mathcal{P}$ isomorphic to $P_3$ and the other is the only vertex of a path in $\mathcal{P}$. 
\item For every path $P:x_1,\ldots , x_{\ell}$ of $\mathcal{P}$ with
$p=\frac{\ell+1}{2}$ if $\ell$ is odd, and $p=\frac{\ell}{2}$ if $\ell$ is even, the following holds. 
\begin{enumerate}
\item If $\ell$ is odd, then the connected components of $T-x_{p}$ that contain $x_{p-1}$ and $x_{p+1}$ are isomorphic. 
\item If $\ell$ is even, then the connected components of $T-x_{p}x_{p+1}$ that contain $x_{p}$ and $x_{p+1}$, respectively, are isomorphic.
\end{enumerate}
Moreover, there is an automorphism of $G$ that maps $x_1$ to $x_{\ell}$  and $x_{\ell}$ to $x_1$.
\end{enumerate}
\end{theorem}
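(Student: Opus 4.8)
The forward direction requires no fresh argument: conditions~(i), (ii), and~(iii) are precisely the properties of minimum path covers of an iso-unique zero forcing tree established in Lemma~\ref{l:endVerticesAreLeafs}, in Lemmas~\ref{l:interiorCover} and~\ref{l:P3} together, and in Lemma~\ref{l:autP}, respectively. So I would dispose of ``only if'' in one sentence and devote the work to ``if''.

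For ``if'', assume $T$ satisfies (i)--(iii) for every minimum path cover, and let $S_1,S_2$ be minimum zero forcing sets of $T$; the goal is an automorphism $\phi$ of $T$ with $\phi(S_1)=S_2$. The plan is induction on $n(T)$ together with a reduction that joins $S_1$ to $S_2$ by a chain of ``elementary moves'' on minimum zero forcing sets, each move realized by an automorphism, so that the composition of these automorphisms is the desired $\phi$. Recall that every minimum zero forcing set comes from a minimum path cover by selecting one end-vertex of each path (its forcing chains). The easy moves re-select the end-vertex of a single path of a fixed minimum path cover, or rearrange one of the exceptional ``$P_3$ with pendant singletons'' configurations sanctioned by~(ii). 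By Proposition~\ref{prp:hogben} together with Lemma~\ref{l:OneIsInterior} and~(ii), such configurations are the only obstruction to uniqueness of the minimum path cover, so these moves already link all minimum zero forcing sets arising from minimum path covers that differ only in a pendant leaf; and each such move is a transposition of two leaves sharing a support vertex, hence an automorphism of $T$.

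The one substantive move is to interchange the two end-vertices $x_1$ and $x_\ell$ of a non-trivial path $P:x_1,\dots,x_\ell$ of a minimum path cover $\mathcal P$, keeping all other choices fixed. For this I would start from the automorphism $\alpha$ furnished by the ``moreover'' clause of~(iii). Since $T$ is a tree, $\alpha$ reverses the unique $x_1,x_\ell$-path, so $\alpha(x_i)=x_{\ell+1-i}$ for all $i$; hence $\alpha$ fixes the centre $x_p$ (when $\ell$ is odd) or the central edge $x_px_{p+1}$ (when $\ell$ is even) and interchanges the two branches $C_1\ni x_{p-1}$ and $C_2\ni x_{p+1}$ of $T-x_p$ (respectively the analogous branches of $T-x_px_{p+1}$), which by conditions~(iii)(a) and~(iii)(b) are isomorphic. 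I would then replace $\alpha$ by the map $\phi$ that equals $\alpha|_{C_1}$ on $V(C_1)$, equals $(\alpha|_{C_1})^{-1}$ on $V(C_2)$, and is the identity elsewhere; using that $x_p$ has exactly one neighbor in each component of $T-x_p$, one checks that $\phi$ is an automorphism of $T$, that $\phi(x_1)=x_\ell$, and that $\phi$ fixes $V(T)\setminus(V(C_1)\cup V(C_2))$ pointwise.

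The hard part is to verify that $\phi$ indeed performs the move, that is, that $\phi$ carries the chosen end-vertices lying in $C_1$ onto those lying in $C_2$; equivalently, that $\phi$ maps the minimum zero forcing set of the subtree induced by $C_1^-$, the set of vertices of $C_1$ not on $P$, onto that of the corresponding $C_2^-$. I would extract this from the induction hypothesis: the graph on $C_i^-$ is again a tree, any of its minimum path covers extends through $P$ to a minimum path cover of $T$, and, because condition~(iii) applied in $T$ forces the components it refers to to lie wholly inside $C_i^-$ so that the ambient and the subtree notions of ``component'' coincide, this subtree inherits (i)--(iii) and is thus iso-unique zero forcing by induction. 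The genuinely delicate issue, which I expect to be the main obstacle, is that the automorphism produced inductively on $C_i^-$ must be chosen so as to fix the vertices at which $C_i^-$ hangs from the ``spine'' $x_p,x_{p\pm1},\dots$ of $P$; since~(iii) forces the two branches to hang from their spines in the same way, I would build this requirement into the statement, proving the ``if'' direction in the slightly strengthened form that additionally keeps a prescribed independent set of leaf/attachment vertices fixed. The case of even $\ell$ is handled symmetrically, splitting at the central edge.
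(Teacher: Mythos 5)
Your overall architecture coincides with the paper's: necessity is read off from Lemmas~\ref{l:endVerticesAreLeafs}, \ref{l:interiorCover}, \ref{l:P3} and~\ref{l:autP}, and sufficiency is reduced to elementary moves (re-selecting the end-vertex of a single path; permuting leaves attached to a common support vertex), with distinct minimum path covers reconciled by trimming pendant generalized stars and invoking Proposition~\ref{prp:hogben}. Your explicit construction of the localized automorphism $\phi$ (acting as $\alpha$ on $C_1$, as $(\alpha|_{C_1})^{-1}$ on $C_2$, and as the identity elsewhere) is correct, and you have accurately located the crux: an automorphism that merely exchanges $x_1$ and $x_\ell$ need not carry $S$ onto $S'$, because it may scramble the chosen end-vertices of the other paths lying in $C_1\cup C_2$. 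The paper passes over this point very quickly, so identifying it is to your credit.

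The gap is that the step you yourself call ``the hard part'' is never closed, and the plan you sketch for it rests on unverified claims. First, your induction needs every minimum path cover of each branch $C_i^-$ to satisfy (i)--(iii) as a tree in its own right; your justification addresses only (iii), and even there the assertion that the two components named in (iii) lie wholly inside $C_i^-$ is stated, not proved --- a priori, for a path $Q$ of the cover contained in a component $K$ of $C_i^-$ with centre $x_c$, one of the two components of $T-x_c$ may contain the attachment vertex of $K$ and hence all of $T\setminus K$, in which case the ambient and subtree notions of component do not coincide and (iii) need not localize to $K$. Second, the strengthened hypothesis (``additionally fixes a prescribed set of attachment vertices'') is in tension with the very moves it must perform: by (ii) an attachment vertex is an interior vertex of its path, and the end-for-end flip of that path moves every non-central interior vertex, so the required automorphism can fix the attachment vertex only if that vertex is the centre of its path; you would need to show that (i)--(iii) force this, which you do not. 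Until these two points are settled, the ``if'' direction is not established. For comparison, the paper avoids recursing into subtrees altogether: it performs the single-path swaps globally, one path at a time, composing the automorphisms supplied directly by condition (iii).
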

\begin{proof}
By Lemmas~\ref{l:endVerticesAreLeafs},~\ref{l:interiorCover},~\ref{l:P3} and~\ref{l:autP}, the conditions (i),~(ii), and~(iii) are necessary for a tree $T$ to be an iso-unique zero forcing graph. So, let $T$ be a tree such that for any minimum path cover $\cal P$ of $T$ conditions (i),~(ii), and~(iii) are satisfied.

Let $S$ be a minimum zero forcing of $T$. For every $x\in S$ there is a forcing chain (a path) that starts in $x$.
 %there is a sequence of vertices that become blue as a result of the color-change rule, which starts at vertex $x$, and these vertices lie along a path that has $x$ as its end-vertex. 
Hence, every minimum zero forcing set $S$ yields a path cover $\cal P$ of $T$ such that each vertex of $S$ is an end-vertex of (a unique) path in $\cal P$. Since $Z(T)=P(T)$, $\mathcal{P}$ must be a minimum path cover of $T$. On the other hand, it is also known and easy to see that every minimum zero forcing $S$ of $T$ consists only of end-vertices of the paths from the corresponding path cover $\cal P$. 
By (i), all end-vertices of paths in $\cal P$, and thus all vertices in $S$, are leaves of $T$. Clearly, two different zero forcings $S$ and $S'$ may yield the same path cover $\cal P$ of $T$; however, since they are different, some of the end-vertices of the paths in $\cal P$ are different in $S$ and $S'$.  

Let $S$ and $S'$ be arbitrary minimum zero forcing sets of $T$, and first assume that both $S$ and $S'$ yield the same path cover $\mathcal{P}$; that is, each of $S$ and $S'$ consists of end-vertices of paths from $\mathcal{P}$, one end-vertex of each such path.

{\bf Case 1}: $|S \setminus S'|=1$. Hence there exists a path $P:x_1,\ldots ,x_k$ in $\mathcal{P}$ such that $x_1 \in S$ and $x_k \in S'$. By (iii), there exists an automorphism that exchanges $x_1$ to $x_k$ and consequently maps $S$ to $S'$.

{\bf Case 2}: $|S \setminus S'|=\ell > 1$. Let $Q_1,\ldots , Q_{\ell}$ be the paths of $\mathcal{P}$ for which $S \cup S'$ contains both of their end-vertices. First, let $S_1$ be the set of vertices obtained from $S_0=S$ by replacing the end-vertex of $Q_1$ that is contained in $S$ with the other end-vertex of $Q_1$, that is, with the end-vertex of $Q_1$ that is contained in $S'$. Case 1 implies that there exists an automorphism $f_1:V(T) \to V(T)$ that maps vertices of $S=S_0$ to vertices of $S_1$. We continue with the procedure so that in the $i^{\rm th}$ step, where $i \in \{2,\ldots , \ell\}$, $S_i$ is the set of vertices obtained from $S_{i-1}$ by replacing the end-vertex of $Q_i$ that is contained in $S_{i-1}$ with the other end-vertex of $Q_i$, that is, with the end-vertex of $Q_i$ that is contained in $S'$. Case 1 implies that there exists an automorphism $f_i:V(T) \to V(T)$ that maps vertices of $S_{i-1}$ to vertices of $S_i$. Clearly, $f=f_{\ell} \circ f_{l-1} \circ \ldots \circ f_1$ is an automorphism of $T$ that maps $S$ to $S'$.    

Finally, let $S$ correspond to $\mathcal{P}$ and $S'$ correspond to ${\mathcal{P}}'$, where ${\mathcal{P}}'$ and $\mathcal{P}$ are distinct minimum path covers of $T$. If $T$ is a generalized star, then it follows from (i) and (ii) that $T$ is a star and thus an iso-unique zero forcing tree. Otherwise, let $K$ be an arbitrary pendant generalized star of $T$ with mid vertex $v$. If $\deg_K(v) \geq 3$, then it follows from (i) and (ii) that $K$ is a star. If $K$ is a path, then condition (iii) implies that $v$ is the center of $K$ and clearly $K$ must be an element of any minimum path cover of $T$. In particular, $K$ belongs to $\cal P$ and ${\mathcal{P}}'$. For each pendant generalized star $K$ of $T$ that is a star, we remove from $T$ all except two leaves of $K$ and denote the resulting tree by  $T'$. Here we are assuming that when leaves that belong to either $\cal P$ or ${\cal P}'$ are removed, the resulting path cover instead adopts   leaves that remained in the pendant generalized star. It follows from (ii) that any connector edge of any minimum path cover of $T'$ is interior and thus the minimum path cover is unique by~Proposition~\ref{prp:hogben}. Hence ${\mathcal{P}}$ and ${\mathcal{P}}'$ when restricted to $T'$ coincide. Thus the minimum covers $\mathcal{P}$ and ${\mathcal{P}}'$ can only differ in some of the leaves of a pendant generalized star $K_{1,k}$, where $k-2$ of these leaves are covered by one-vertex paths. In any case, any minimum zero forcing set of $T$ contains exactly $k-1$ vertices (that are leaves of $T$) of any pendant generalized star $K_{1,k}$. Clearly, there is an automorphism that maps $k-1$ leaves to some other $k-1$ leaves, which are all attached to the same support vertex. 
Combining this with the initial case when two zero forcings yielded the same path cover, we deduce that there is an automorphism that maps $S$ to $S'$.
\end{proof}

Fig.~\ref{fig:EX} shows an example of an iso-unique zero forcing tree depicting also its minimum path cover and vertices of a smallest zero forcing.

\begin{figure}[ht!]
\begin{center}
\begin{tikzpicture}[scale=0.5,style=thick]
\def\vr{4pt}
%% vertices defined %%
%M

%X
\path (0,1.8) coordinate (a);
\path (-2,0) coordinate (b);
\path (0,-1.8) coordinate (c);
\path (0,0) coordinate (d);
\path (4,3) coordinate (e);
\path (4,1.5) coordinate (f);
\path (4,0) coordinate (g);
\path (4,-1.5) coordinate (h);
\path (4,-3) coordinate (i);
\path (8,0) coordinate (j);
\path (8,1.8) coordinate (k);
\path (10,1) coordinate (l);
\path (10,-1) coordinate (m);
\path (8,-1.8) coordinate (n);

%elipsas 
\draw[dotted] (0,0) ellipse (0.5cm and 2.3cm);
\draw[dotted] (-2,0) ellipse (0.4cm and 0.4cm);
\draw[dotted] (10,1) ellipse (0.4cm and 0.4cm);
\draw[dotted] (10,-1) ellipse (0.4cm and 0.4cm);
\draw[dotted] (8,0) ellipse (0.5cm and 2.3cm);
\draw[dotted] (4,0) ellipse (0.5cm and 3.5cm);

%% edges %%

\draw (a)--(d)--(c);
\draw (e)--(f)--(g)--(h)--(i);
\draw (b)--(d)--(g)--(j)--(k);
\draw (l)--(j)--(m);
\draw (j)--(n);

%% vertices %%

\draw (a) [fill=black] circle (\vr);
\draw (b) [fill=black] circle (\vr);
\draw (c) [fill=white] circle (\vr);
\draw (d) [fill=white] circle (\vr);
\draw (e) [fill=white] circle (\vr);
\draw (f) [fill=white] circle (\vr);

\draw (g) [fill=white] circle (\vr);
\draw (i) [fill=black] circle (\vr);
\draw (j) [fill=white] circle (\vr);
\draw (h) [fill=white] circle (\vr);
\draw (k) [fill=white] circle (\vr);
\draw (l) [fill=black] circle (\vr);
\draw (m) [fill=black] circle (\vr);
\draw (n) [fill=black] circle (\vr);
\end{tikzpicture}
\end{center}
\caption{An iso-unique zero forcing tree; paths of a minimum path cover are circled by ellipses; vertices of a minimum zero forcing are shaded.}
\label{fig:EX}
\end{figure}

The following corollary of Theorem~\ref{thm:characterization} will be useful in the recognition algorithm for iso-unique zero forcing trees. 
(Note that by Proposition~\ref{prp:hogben}, a minimum path cover is unique if and only if it is an interior path cover.) 

\begin{corollary}
\label{cor:iso-uniquezeroforcing}
If $T$ is an iso-unique zero forcing tree and $T'$ is the tree obtained from $T$ such that for every strong support vertex $v$ of $T$, which is adjacent to more than two leaves, all but two leaves adjacent to $v$ are removed, then $T'$ has the unique minimum path cover. 
\end{corollary}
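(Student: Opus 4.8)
The plan is to produce one explicit minimum path cover of $T'$ and then apply Proposition~\ref{prp:hogben}: it suffices to show that this path cover is interior, that is, that all of its connector vertices are interior. Throughout I will use that, since $T$ is an iso-unique zero forcing tree, Theorem~\ref{thm:characterization} guarantees that \emph{every} minimum path cover of $T$ satisfies conditions (i), (ii) and (iii); in fact only (i) and (ii) (equivalently, Lemma~\ref{l:P3}) will be needed.

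The first and main step is to pin down, for a fixed minimum path cover $\mathcal P$ of $T$, the behaviour of $\mathcal P$ at a strong support vertex $v$ that is adjacent to $k\ge 3$ leaves. Since $\deg_T(v)\ge 3$, condition (i) forces $v$ to be an interior vertex of the path $P_v\in\mathcal P$ containing it, so $P_v$ meets at most two of the $k$ leaves at $v$; any leaf at $v$ not on $P_v$ is an end-vertex of its own path $Q\in\mathcal P$, and since its only neighbour $v$ does not lie on $Q$, the path $Q$ must be a single-vertex path. Thus at least $k-2\ge 1$ of the leaves at $v$ form single-vertex paths of $\mathcal P$; applying condition (ii) to the connector edge joining such a leaf to $v$ shows that $v$ is the centre of a path of $\mathcal P$ isomorphic to $P_3$, so $P_v\cong P_3$ and exactly $k-2$ of the leaves at $v$ are single-vertex paths, the other two being the end-vertices of $P_v$. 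Conversely, condition (ii) (or Lemma~\ref{l:P3}) shows that the unique vertex of any single-vertex path of $\mathcal P$ is a leaf whose neighbour is the centre of a $P_3$ of $\mathcal P$ with two leaf end-vertices, hence a strong support vertex adjacent to at least three leaves. Consequently, the single-vertex paths of $\mathcal P$ are precisely the leaves that are deleted when passing from $T$ to $T'$, and their number equals $d:=n(T)-n(T')$.

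Since the leaves at a common support vertex of a tree form one orbit of its automorphism group, the isomorphism type of $T'$ does not depend on which two leaves are retained at each strong support vertex, and having a unique minimum path cover is an isomorphism invariant; hence I may assume that at every strong support vertex $v$ of $T$ with more than two leaves the two retained leaves are exactly the two end-vertices of the $P_3$ of $\mathcal P$ centred at $v$. Then $\mathcal P':=\{P\in\mathcal P:\ V(P)\subseteq V(T')\}$ is $\mathcal P$ with its single-vertex paths removed, and it is a path cover of $T'$ with $|\mathcal P'|=|\mathcal P|-d=P(T)-d$. Because deleting a single leaf from a tree decreases its path cover number by at most one (add back the corresponding single-vertex path), iterating over the $d$ deleted leaves gives $P(T')\ge P(T)-d=|\mathcal P'|$, so $\mathcal P'$ is a minimum path cover of $T'$.

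It remains to check that $\mathcal P'$ is interior. If $e=xy$ is a connector edge of $\mathcal P'$, then $e$ is an edge of $T$ joining two distinct paths of $\mathcal P'\subseteq\mathcal P$, so $e$ is also a connector edge of $\mathcal P$; by condition (ii) either $x$ and $y$ are both interior connector vertices, or one of them is the only vertex of a path of $\mathcal P$. The latter alternative cannot occur, since $x$ and $y$ both lie on paths of $\mathcal P'$, and no path of $\mathcal P'$ is a single-vertex path. Hence every connector vertex of $\mathcal P'$ is interior, so $\mathcal P'$ is an interior minimum path cover of $T'$, and Proposition~\ref{prp:hogben} yields that $\mathcal P'$ is the unique minimum path cover of $T'$. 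The main obstacle is the first step, namely establishing the exact correspondence between single-vertex paths of a minimum path cover of $T$ and the surplus leaves at strong support vertices; everything after that is bookkeeping.
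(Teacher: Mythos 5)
Your proof is correct and follows essentially the same route as the paper: both arguments use conditions (i) and (ii) of Theorem~\ref{thm:characterization} (equivalently, Lemmas~\ref{l:endVerticesAreLeafs}--\ref{l:P3}) to show that a minimum path cover of $T'$ has only interior connector vertices, and then conclude uniqueness via Proposition~\ref{prp:hogben}. The only difference is one of explicitness: you carefully construct $\mathcal{P}'$ from a minimum path cover of $T$, verify its minimality by the leaf-deletion count, and justify the choice of retained leaves up to automorphism, all of which the paper leaves implicit inside the proof of Theorem~\ref{thm:characterization}.
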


We are ready to present the announced algorithm for deciding whether a given tree is an iso-unique zero forcing graph. It is based on Theorem~\ref{thm:characterization} and Corollary~\ref{cor:iso-uniquezeroforcing}.

\vspace{5mm}

\noindent {\bf Algorithm Iso-Unique Zero Forcing Tree}
 
\noindent {\bf Input.} A tree $T$.

\noindent {\bf Output.} YES if $T$ is an iso-unique zero forcing tree, NO otherwise.

\begin{itemize}
\item[(1)] Let $T'$ be the tree obtained from $T$ such that for every strong support vertex $v$ of $T$, which is adjacent to more than two leaves, all but two leaves adjacent to $v$ are removed.
\item[] Let $\cal P$ be a minimum path cover of $T'$. If $\mathcal{P}$ is not interior, then {\bf RETURN} NO. 

\item[(2)] Consider $\cal P$ in $T$. 
\item[(3)] For every path $P:x_1,\ldots , x_{\ell}$ of $\mathcal{P}$ with
$p=\frac{\ell+1}{2}$ if $\ell$ is odd, and $p=\frac{\ell}{2}$ if $\ell$ is even, check:
\begin{enumerate}[(a)]
\item If $\ell$ is odd, then the connected components of $T-x_{p}$ that contain $x_{p-1}$ and $x_{p+1}$ are isomorphic. 
\item If $\ell$ is even, then the connected components of $T-x_{p}x_{p+1}$ that contain $x_{p}$ and $x_{p+1}$, respectively, are isomorphic.
\end{enumerate}
Moreover, there is an automorphism of $G$ that maps $x_1$ to $x_{\ell}$  and $x_{\ell}$ to $x_1$.
\end{itemize}
\noindent If true for all paths $P\in {\cal P}$, then {\bf RETURN} YES, otherwise {\bf RETURN} NO.  

\bigskip

The correctness of the algorithm is a direct consequence of Theorem~\ref{thm:characterization} and Corollary~\ref{cor:iso-uniquezeroforcing}. If $T$ is an iso-unique zero forcing tree, then the minimum path cover $\cal P$ of $T'$ is interior by Corollary~\ref{cor:iso-uniquezeroforcing} and thus the algorithm does not stop in step (1). By Theorem~\ref{thm:characterization}, the condition of step (3) is satisfied for every path $P \in \cal P$ and hence the algorithm returns YES. For the converse, if $T$ is not an iso-unique zero forcing tree, then one condition of Theorem~\ref{thm:characterization} is not satisfied. If (i) or (ii) of Theorem~\ref{thm:characterization} does not hold, then the minimum path cover $\cal P$ of $T'$ is not interior and thus the algorithm returns NO. If (iii) does not hold, then step (3) returns NO.

Clearly, one can construct $T'$ from $T$ in linear time. By an algorithm from~\cite{HJ} one can construct a minimum path cover of $T$ in linear time. Indeed, the mentioned algorithm is based on finding a pendant generalized star, and providing a path cover for it, and then continuing the process in the tree from which this pendant generalized star is removed. In addition, checking if the resulting path cover is interior can be done efficiently by going through all connector edges and checking if the end-vertices are interior vertices of their paths. This resolves (1). To consider $\cal P$ in $T$ we only need to add additional one-vertex paths to $\cal P$, which consist of vertices deleted in the previous step. This resolves step (2). For step (3), we apply an algorithm for verifying whether two trees are isomorphic. We can use the classical AHU algorithm for checking tree isomorphism~\cite{AHU}. We slightly modify the algorithm by fixing the two vertices and checking whether an algorithm maps one to the other. This algorithm is linear, and since the number of paths in a path cover is $O(n)$, we derive that entire algorithm performs in $O(n^2)$ time. We summarize these observations in the following result.

\begin{theorem}\label{th:algorithmZF}
Algorithm Iso-Unique Zero Forcing Tree verifies with time complexity $O(n^2)$ whether a given tree is an iso-unique zero forcing tree. 
\end{theorem}

%%%%%%%%%%%%%%%%%%%%%%%%%%%%%%%%%
%%%%%%%%%%%%%%%% TOTAL GRUNDY
%%%%%%%%%%%%%%%%%%%%%%%%%%%%%%%%

\section{Unique Grundy total domination graphs}
\label{sec:TGD}

In this section, we consider graphs in which Grundy total dominating sequences are unique, or iso-unique respectively. As it turns out (see Corollary~\ref{cor:tunique_characterization}) uniqueness property holds precisely in the graphs $G$ in which $V(G)$ is a Grundy total dominating set. It is again more difficult to determine which graphs admit the iso-uniqueness property with respect to Grundy total domination. We manage to characterize trees with this property, which leads to a linear-time algorithm for recognizing iso-unique Grundy total domination trees.

Grundy total domination is in the same relation with total domination as Grundy domination is with (standard) domination. Consequently, the condition in its definition is modified from~\eqref{eq:defGrundy} in such a way that closed neighborhoods are replaced by open neighborhoods. Next, we follow with formal definitions, cf. also~\cite{BGD, bhr-2016, GJKM}. 

A sequence $S=(v_1,\ldots,v_k)$ of vertices of a graph $G$ is an {\em open neighborhood sequence}, if for each $i\in [k]$: 
\begin{equation}
\label{eq:detfGrundy}
N(v_i) \setminus \bigcup_{j=1}^{i-1}N(v_j)\not=\emptyset.
\end{equation}
\noindent Note that if $G$ has no isolated vertices, then the minimum length $k$ of an open neighborhood sequence $S$ such that $\widehat{S}$ is a total dominating set of $G$, is the total domination number $\gamma_t(G)$ of $G$. The maximum length of an open neighborhood sequence in $G$ is the {\em Grundy total domination number}, $\ggrt(G)$, of $G$, and the corresponding set $\widehat{S}$ is a {\em Grundy total dominating set}, while 
$S$ is a {\em Grundy total dominating sequence} of $G$. If $S$ is an open neighborhood sequence, we also use terms {\em t-footprinter, t-footprints}, meaning of which should be clear. As in the case of Z-Grundy domination, $G$ may have isolated vertices in which case Grundy total dominating set is not a total dominating set of $G$.

\begin{proposition}\label{prp:tunique}
If $G$ is a non-empty graph and $x$ an arbitrary non-isolated vertex of $G$, then there exists a Grundy total dominating sequence of $G$ that contains $x$. 
\end{proposition}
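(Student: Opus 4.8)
The plan is to mimic exactly the argument used for Proposition~\ref{prp:Zunique}, replacing open-neighborhood Z-footprinting with open-neighborhood t-footprinting. Let $S=(v_1,\ldots,v_k)$ be an arbitrary Grundy total dominating sequence of $G$, i.e.\ a maximum-length open neighborhood sequence, so $k=\ggrt(G)$, and let $x$ be a non-isolated vertex of $G$. We may assume $x\notin\widehat S$, else there is nothing to prove. Denote by $I$ the set of isolated vertices of $G$; since every term of an open neighborhood sequence must t-footprint some vertex (and isolated vertices t-footprint nothing and are t-footprinted by nothing), we have $\widehat S\cap I=\emptyset$ and $\widehat S$ is a total dominating set of $G-I$.

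First I would record the reversal property: for each $i\in[k]$ pick a vertex $v_i'\in N(v_i)$ that is t-footprinted by $v_i$, i.e.\ $v_i'\in N(v_i)\setminus\bigcup_{j<i}N(v_j)$. In particular $v_i'$ has no neighbor among $v_1,\ldots,v_{i-1}$. Then the reversed sequence $S'=(v_k',v_{k-1}',\ldots,v_1')$ is again an open neighborhood sequence, because when $v_i'$ is processed (the terms preceding it in $S'$ are exactly $v_k',\ldots,v_{i+1}'$, all of which are neighbors of $v_k,\ldots,v_{i+1}$), the vertex $v_i$ itself lies in $N(v_i')\setminus\bigcup_{\ell>i}N(v_\ell')$: indeed $v_i\in N(v_i')$ since $v_i'\in N(v_i)$, and $v_i\notin N(v_\ell')$ for $\ell>i$ because $v_\ell'$ was t-footprinted by $v_\ell$ and so is non-adjacent to $v_i$ when $i<\ell$. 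Hence $S'$ has length $k=\ggrt(G)$, so it is again a Grundy total dominating sequence.

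Next I would insert $x$ in place of the right term. Since $x$ is non-isolated and $\widehat S$ totally dominates $G-I$, $x$ has a neighbor in $\widehat S$; let $v_j$ be a vertex of $S$ with $x\in N(v_j)$, chosen with $j$ minimal, so that $x\notin N(v_\ell)$ for all $\ell<j$, i.e.\ $x$ is t-footprinted by $v_j$. Redefine $v_j':=x$ (this is a legitimate choice of a vertex t-footprinted by $v_j$) while keeping the other $v_i'$ as before, and form $S'=(v_k',\ldots,v_1')$ with this modified $v_j'$. The verification above goes through verbatim with $x$ in the role of $v_j'$, so $S'$ is a Grundy total dominating sequence of $G$ of length $\ggrt(G)$ that contains $x$, which completes the proof.

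The argument is essentially routine once Proposition~\ref{prp:Zunique} is in hand; the only point requiring a little care is the bookkeeping that guarantees the reversed sequence is valid after substituting $x$ for $v_j'$, namely that the minimality of $j$ is exactly what makes $x$ a valid t-footprintee of $v_j$, and that replacing one t-footprintee by another does not disturb the non-adjacency relations used to validate the reversal. I expect no genuine obstacle beyond this verification.
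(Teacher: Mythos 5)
Your proposal is correct and follows essentially the same route as the paper's own proof: take an arbitrary $\ggrt(G)$-sequence, choose for each $v_i$ a t-footprinted neighbor $v_i'$, set $v_j':=x$ where $v_j$ is the t-footprinter of $x$, and reverse the sequence. The paper states the reversal more tersely, but your more detailed verification of why the reversed sequence is an open neighborhood sequence is exactly the justification the paper is relying on.
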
 
\begin{proof}
Let $S=(v_1,\ldots , v_k)$ be an arbitrary $\ggrt(G)$-sequence and $x \in V(G)$ vertex with at least one neighbor in $G$. Furthermore denote by $I$ the set of isolated vertices of $G$. We may assume that $x \notin \widehat{S}$. Since $S$ is an open neighborhood sequence, each vertex $v_i \in S$ t-footprints at least one vertex $v_i' \in N(v_i)$. Note that since $v_i'$ is footprinted by $v_i$, $v_i'$ is not adjacent to $v_{\ell}$ for any $\ell \in [i-1]$. As $\widehat{S}$ is a total dominating set of $G-I$, $x$ has at least one neighbor in $\widehat{S}$. Let $v_j \in S$ be the t-footprinter of $x$. Then $S'=(v_k',v_{k-1}',\ldots , v_1')$, where $v_j'=x$, is a $\ggrt(G)$-sequence that contains $x$. 
\end{proof}

\begin{corollary}\label{cor:tunique}
If $G$ is a graph, $I$ the set of isolated vertices of $G$, and ${\cal S}=\{S:\, S$  is a $\ggrt(G) \textrm{-sequence of }G\}$, then
$$\bigcup_{S\in {\cal S}}{ \widehat{S}=V(G)\setminus I.}$$
\end{corollary}

\begin{corollary}\label{cor:tunique_characterization}
A connected graph $G$ is a unique Grundy total domination graph if and only if $\ggrt(G)=n(G)$.
\end{corollary}
\begin{proof}
If $G$ is a connected graph with $\ggrt(G)=n(G)$, then the only Grundy total dominating set of $G$ is the set $V(G)$. Thus $G$ is a unique Grundy total domination graph. 

For the converse, let $G$ be a connected non-trivial unique Grundy total domination graph. Hence $G$ has no isolated vertices. Let $S$ be an arbitrary $\ggrt(G)$-sequence.  If $\ggrt(G) \leq n(G)-1$, then there exists $x \in V(G) $ that is not contained in $S$. By Proposition~\ref{prp:tunique}, there exists a $\ggrt(G)$-sequence $S'$ that contains $x$. Since $S \neq S'$, we get a contradiction, which implies $\ggrt(G)=n(G)$.
\end{proof}

A characterization of graphs $G$ having $\ggrt(G)=n(G)$ was proved in the seminal paper on Grundy total domination~\cite{bhr-2016}. 

\begin{theorem}
{\rm \cite[Theorem 4.2]{bhr-2016}}
\label{thm:BHR}
If $G$ is a graph with no isolated vertices, then $\ggrt(G)=n(G)$ if and only if there exists an integer $k$ such that
$n(G)=2k$, and the vertices of $G$ can be labeled $x_1,\ldots,x_k,y_1,\ldots,y_k$ in such a way that \2 \\
\indent $\bullet$ $x_i$ is adjacent to $y_i$ for each $i$, \\
\indent $\bullet$ $\{x_1,\ldots,x_k\}$ is an independent set, and
\\
\indent $\bullet$ $y_j$ is adjacent to $x_i$ implies $i \ge j$.
\end{theorem}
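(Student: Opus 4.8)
The plan is to prove both directions directly, working with a maximum open neighborhood sequence of $G$, and I will use throughout the elementary bound $\ggrt(G)\le n(G)$: in any open neighborhood sequence the vertices t-footprinted by successive terms form nonempty pairwise disjoint subsets of $V(G)$. Hence $\ggrt(G)=n(G)$ if and only if $G$ admits an open neighborhood sequence of length $n(G)$ that uses every vertex, and in that case each term t-footprints exactly one vertex and these singletons partition $V(G)$. For the sufficiency, given a labeling as in the statement, I would simply exhibit the sequence $S=(x_1,\dots,x_k,\,y_k,y_{k-1},\dots,y_1)$ and check that each term t-footprints a new vertex: when $x_i$ is processed only $x_1,\dots,x_{i-1}$ precede it, and $y_i\in N(x_i)$ is adjacent to no $x_m$ with $m<i$ (such an edge would force $i\le m$), so $x_i$ t-footprints $y_i$; when $y_m$ is processed the preceding vertices are all the $x$'s together with $y_k,\dots,y_{m+1}$, and $x_m\in N(y_m)$ is adjacent to no $x_\ell$ (independence) and to no $y_\ell$ with $\ell>m$ (such an edge would force $m\ge\ell$), so $y_m$ t-footprints $x_m$. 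Thus $S$ has length $2k$, so $\ggrt(G)=n(G)$.

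For necessity, fix a maximum open neighborhood sequence $S=(v_1,\dots,v_n)$ and define $\phi\colon V(G)\to V(G)$ by letting $\phi(v_i)$ be the unique vertex t-footprinted by $v_i$. Then $\phi$ is a bijection with $\phi(v)\in N(v)$ for all $v$ (so $\phi$ is fixed-point-free), and $\phi(v_i)\notin N(v_j)$ whenever $j<i$; equivalently, $\phi^{-1}(w)$ is the first vertex of $S$ adjacent to $w$. The crux of the argument --- and the step I expect to be the main obstacle --- is to show that $\phi$ is an involution, i.e.\ that every cycle of $\phi$ has length $2$. Suppose toward a contradiction that $\phi$ has a cycle $u_0\to u_1\to\cdots\to u_{\ell-1}\to u_0$ (meaning $\phi(u_t)=u_{t+1}$, indices taken modulo $\ell$) with $\ell\ge 3$, and label it so that $u_0$ is the first of $u_0,\dots,u_{\ell-1}$ to appear in $S$. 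From $\phi(u_{\ell-2})=u_{\ell-1}$ we get that $u_{\ell-2}$ is the first vertex of $S$ adjacent to $u_{\ell-1}$, while from $\phi(u_{\ell-1})=u_0$ we get $u_0\in N(u_{\ell-1})$. Hence $u_{\ell-2}$ appears in $S$ no later than $u_0$; since $u_0$ is the earliest-appearing member of the cycle and $u_{\ell-2}$ lies on the cycle, this forces $u_{\ell-2}=u_0$, hence $\ell=2$, contradicting $\ell\ge 3$. Therefore every cycle of $\phi$ has length $2$, and $n=2k$, where $k$ is the number of $2$-cycles of $\phi$.

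Finally I would read off the required labeling from the $2$-cycles. Each $2$-cycle $\{a,b\}$ of $\phi$ gives an edge $ab$; in the $i$-th pair let $x_i$ be the vertex occurring earlier in $S$ and $y_i$ the one occurring later, so $x_i\sim y_i$ (condition $1$). Because $\phi(x_i)=y_i$ is the first vertex of $S$ adjacent to $x_i$ and $x_i$ precedes $y_i$, the vertex $x_i$ has no neighbor appearing before it in $S$. Consequently $\{x_1,\dots,x_k\}$ is independent: an edge $x_ix_j$ would give whichever of $x_i,x_j$ appears later in $S$ a neighbor that precedes it (condition $2$). I would then re-index the pairs so that $y_1,\dots,y_k$ appear in $S$ in the order $y_k,y_{k-1},\dots,y_1$; then whenever $y_j\sim x_i$, the vertex $y_j$, being a neighbor of $x_i$, appears no earlier than $y_i$ in $S$, so $j\le i$, which is precisely ``$y_j\sim x_i$ implies $i\ge j$'' (condition $3$). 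The remaining verifications (that the t-footprint sets are indeed singletons, that distinct pairs are vertex-disjoint, and the routine re-indexing bookkeeping) are straightforward and left to the full write-up.
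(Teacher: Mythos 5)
Your proof is correct. Note, however, that the paper does not prove this statement at all: it is quoted verbatim from \cite[Theorem~4.2]{bhr-2016} and used as a black box, so there is no internal proof to compare against. Your argument is a clean self-contained derivation. The sufficiency direction (the explicit sequence $(x_1,\dots,x_k,y_k,\dots,y_1)$ together with the observation that $x_i$ t-footprints $y_i$ and $y_m$ t-footprints $x_m$) is routine and checks out. The substance is in the necessity direction, and your key step is sound: since a length-$n(G)$ open neighborhood sequence forces the footprint sets to be disjoint singletons partitioning $V(G)$, the footprinter map $\phi$ is a fixed-point-free permutation with $\phi(v)\in N(v)$ and with $\phi^{-1}(w)$ equal to the first $S$-neighbor of $w$; your cycle argument (in a cycle of length $\ell\ge 3$ with $u_0$ earliest, $u_{\ell-2}$ must be the first $S$-neighbor of $u_{\ell-1}$, yet $u_0$ is also a neighbor of $u_{\ell-1}$, forcing $u_{\ell-2}=u_0$, impossible) correctly shows $\phi$ is an involution, and the labeling of the resulting $2$-cycles by order of appearance in $S$, with the $y$'s re-indexed in reverse order of appearance, verifies all three bullet conditions. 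The only cosmetic slip is the phrase ``$\phi(x_i)=y_i$ is the first vertex of $S$ adjacent to $x_i$'': what you are actually using is that $\phi^{-1}(x_i)=y_i$ is the first $S$-neighbor of $x_i$; since $\phi$ is an involution on these pairs the two coincide, but the wording should be fixed in the full write-up.
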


Note that Theorem~\ref{thm:BHR} restricted to trees $T$ simplifies to $\ggrt(T)=n$ if and only if $T$ has a perfect matching. 

%%%%% ISO-
%%%%%%%%%%%%%%%

Next, we consider iso-unique Grundy total domination graph. There are several families of such graphs. In particular, this includes the graphs $G$ with $\ggrt(G)=n(G)$, but we can also extend this family by using the following observation. Vertices $u$ and $v$ in a graph $G$ are {\em open twins} if $N_G(u)=N_G(v)$; also, a vertex $u$ is an {\em open twin} if there exists another vertex $v$ such $u$ and $v$ are open twins. It is easy to see that $\ggrt(G)=\ggrt(G-u)$ if $u$ is an open twin in $G$; see~\cite[Proposition 3.6]{BKN}.

\begin{proposition}
\label{prp:ggrt-twins}
Let $u$ be an open twin in a graph $G$.  If $G$ is an iso-unique Grundy total domination graph, then $G-u$ is also an iso-unique Grundy total domination graph. In addition, if $G$ is a tree, then $G$ is an iso-unique Grundy total domination graph if and only if $G-u$ is an iso-unique Grundy total domination graph. 
\end{proposition}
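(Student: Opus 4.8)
The plan is to prove the two directions separately, using the key fact that for an open twin $u$ in $G$ we have $\ggrt(G) = \ggrt(G-u)$ (from \cite[Proposition 3.6]{BKN}), together with a structural understanding of how Grundy total dominating sequences of $G$ and $G-u$ relate.

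First I would establish the general direction: if $G$ is an iso-unique Grundy total domination graph, then so is $G-u$. Let $v$ be the open twin of $u$, so $N_G(u)=N_G(v)$. The main observation is that if $T=(w_1,\dots,w_k)$ is any Grundy total dominating sequence of $G-u$, then $T$ is also a Grundy total dominating sequence of $G$: indeed, since $u$ is dominated in $G-u$ does not quite make sense, but rather — in $G$, the vertex $u$ is totally dominated by exactly the same vertices that totally dominate $v$, and since $v$ is totally dominated along $T$ (as $\widehat T$ is a total dominating set of $G-u$ and hence the open-neighborhood sequence condition forces $v\in N(w_i)\setminus\bigcup_{j<i}N(w_j)$ for some $i$), the same vertex $w_i$ also t-footprints $u$ at that step, but $u$ is then already covered, so the open neighborhood sequence property of $T$ is unaffected; moreover no $w_i$ can be $u$. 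Also, if $u\in\widehat S$ for a Grundy total dominating sequence $S$ of $G$, then $u$ and $v$ play symmetric roles and one of them is the t-footprinter of the other, so we may push $u$ to be the last element (since $u$ only t-footprints vertices that $v$ also t-footprints) and then delete it to obtain a Grundy total dominating sequence of length $\ggrt(G)-1 = \ggrt(G-u)-1$, contradicting maximality — hence no Grundy total dominating set of $G$ contains $u$ when ... actually more carefully: every Grundy total dominating set $A$ of $G$ satisfies $A\subseteq V(G-u)$, because if $u\in A$ then so is $v$ (they are each other's t-footprinter or $u$'s t-footprinter is a neighbor that is also $v$'s, forcing $v$ into the sequence too... this needs the twin swap) and then removing $u$ and still having length $\ggrt(G)=\ggrt(G-u)$ would contradict that $\ggrt(G-u)$ is the max length in $G-u$. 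Thus the Grundy total dominating sets of $G$ are exactly those of $G-u$. Since every automorphism of $G-u$ extends to $G$ by fixing $u$ (as $N_G(u)=N_G(v)$ and $v$ is fixed setwise by any automorphism that... hmm, an automorphism of $G-u$ need not fix $v$), I would instead argue: given two Grundy total dominating sets $A,B$ of $G$, they lie in $G-u$, so there is $\phi\in\mathrm{Aut}(G-u)$ with $\phi(A)=B$; extend $\phi$ to $\hat\phi\in\mathrm{Aut}(G)$ by setting $\hat\phi(u)=u$ if $\phi(v)=v$, and otherwise note that $\phi$ maps the twin-class of $v$ in $G-u$ (vertices with neighborhood $N_{G-u}(v)=N_G(v)\setminus\{u\}$... caution, open twins) — the cleanest route is to observe that open-twin classes are preserved by automorphisms, so $\phi$ permutes the open-twin class of $v$, and adding $u$ back as an extra member of that class, $\phi$ extends. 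I expect the automorphism-extension bookkeeping to be the main technical obstacle, requiring care about open-twin classes.

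Next I would prove the reverse direction for forests: if $G$ is a forest and $G-u$ is an iso-unique Grundy total domination graph, then $G$ is too. The restriction to forests is what makes this tractable: in a forest, if $u$ is an open twin of $v$, then since $|N(u)|=|N(v)|\ge 1$ and there is no cycle, we must have $|N(u)|=|N(v)|=1$, so $u$ and $v$ are twin leaves attached to a common support vertex $s$; that is, $s$ is a strong support vertex. So the statement reduces to: attaching or removing a leaf at a strong support vertex preserves the iso-unique Grundy total domination property for forests. Using the characterization of Grundy total dominating sets of $G$ versus $G-u$ established above (they coincide as a family of sets, viewing $G-u$'s ground set inside $G$'s), and using that any automorphism of $G-u$ extends to an automorphism of $G$ by permuting the leaf-twin class $\{u,v,\dots\}$ of $s$ appropriately (this is automatic since such leaves form an open-twin class and automorphisms of $G-u$ already permute $\{v,\dots\}$), two Grundy total dominating sets of $G$ are related by an automorphism of $G-u$ hence of $G$.

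The main obstacle I anticipate is the first half of the argument: pinning down precisely that no Grundy total dominating set of $G$ contains the open twin $u$ (equivalently that deleting $u$ gives a bijection between the families of Grundy total dominating sets of $G$ and of $G-u$), and that automorphisms transfer both ways. The twin-swap argument — that if $u$ appears in a Grundy total dominating sequence then we may reorder so $u$ is last, then $v$ must also appear and we can delete $u$ to get a shorter-by-one sequence of $G-u$ that is still maximum there, a contradiction unless... — needs to be set up carefully, and the fact that $\ggrt(G)=\ggrt(G-u)$ is exactly what drives the contradiction. Once that dictionary between Grundy total dominating sets of $G$ and $G-u$ is in hand, together with the observation that open-twin classes are automorphism-invariant so that $\mathrm{Aut}(G-u)$ and $\mathrm{Aut}(G)$ act compatibly on these sets, both implications follow; the forest hypothesis in the reverse direction is used solely to guarantee $u$ is a leaf-twin, which makes the extension of automorphisms transparent.
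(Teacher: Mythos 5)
There is a genuine gap, and it sits at the load-bearing claim of your proposal: that no Grundy total dominating set of $G$ contains the open twin $u$, so that the families of Grundy total dominating sets of $G$ and of $G-u$ coincide. This is false. Take $G=P_3$ with leaves $u,v$ (which are open twins) and centre $w$: then $\ggrt(G)=2$ and $\{u,w\}$ is a Grundy total dominating set containing $u$ (the sequence $(u,w)$ works). The sub-argument you offer for the claim is also backwards: you assert that $u\in A$ forces $v\in A$ because ``one of them is the t-footprinter of the other,'' but open twins are never adjacent (if $u\sim v$ and $N(u)=N(v)$ then $u\in N(u)$), so neither can t-footprint the other; and since $N(u)=N(v)$, once one of the two twins appears in an open neighborhood sequence the other has nothing left to t-footprint, so \emph{at most one} of $u,v$ occurs in any such sequence. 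The correct dictionary, which the paper establishes, is asymmetric: the $\ggrt(G-u)$-sets are exactly the $\ggrt(G)$-sets that avoid $u$, and the transposition $\phi_{u\leftrightarrow v}$ (an automorphism of $G$) carries each $\ggrt(G)$-set containing $u$ onto one avoiding $u$. The forward direction then follows by restricting the iso-uniqueness of the full family to this subfamily, roughly as you intend, but not via your claimed bijection.

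The false dictionary also undermines your reverse (forest) direction, which you reduce to ``the two families coincide, so transfer the automorphism.'' Since the family of $\ggrt(G)$-sets is genuinely larger (it contains sets through $u$), one must show that every $\ggrt(G)$-set can be moved by an automorphism of $G$ into the subfamily of $\ggrt(G-u)$-sets and that automorphisms of $G-u$ relating two such sets can be realized in $G$. The paper's proof does real work here that your proposal has no substitute for: it shows that in every Grundy total dominating sequence of $G-u$ (and hence of $G$) the support vertex $w$ is t-footprinted by a leaf of $U$, by a leaf-counting argument that exploits iso-uniqueness of $G-u$ (replacing a non-leaf t-footprinter of $w$ by a leaf yields another maximum sequence with more leaves, which no automorphism can match). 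This structural fact is what controls how the $\ggrt$-sets meet the twin class $U$ and makes the automorphism transfer between $G-u$ and $G$ legitimate. You correctly identify that in a forest $u$ must be a twin leaf at a strong support vertex, but the extension of an automorphism of $G-u$ to $G$ is not ``automatic'' without knowing how the sets intersect $U$.
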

\begin{proof}

Let $u$ and $v$ be open twins in a graph $G$. 
It is easy to see that at most one of these two vertices belongs to an open neighborhood sequence, and also they are both t-footprinted by the same vertex in any such sequence. Now, there is a natural automorphism $\phi_{u\leftrightarrow v}$ that exchanges $u$ and $v$ and fixes all other vertices of $G$.  Note that $\ggrt(G-u)=\ggrt(G)$, and $S$ is a Grundy total dominating sequence in $G$ if and only $S'$ is a Grundy total dominating sequence in $G-u$, where $S'$ is obtained from $S$ by replacing $u$ with $v$ if necessary (or, otherwise, if $u$ is not in $S$, then $S'=S$). By Proposition~\ref{prp:tunique}, $v$ belongs to a $\ggrt(G)$-set, $u$  belongs to a $\ggrt(G)$-set and $v$  belongs to a $\ggrt(G-u)$-set.

Suppose that $G$ is an iso-unique Grundy total domination graph. The family $\cal F$ of $\ggrt(G-u)$-sets is a subfamily of the family ${\cal F}'$ of $\ggrt(G)$-sets, where ${\cal F}'\setminus{\cal F}$ consists of exactly those $\ggrt(G)$-sets that contain $u$.  Since in ${\cal F}'$  every two sets are exchangeable by an automorphism, the same holds in ${\cal F}$, which consists of $\ggrt(G)$-sets that do not contain $u$. Hence $G-u$ is an iso-unique Grundy total domination graph.

For the second statement of the proposition, when $G$ is a tree, we only need to prove the reversed direction. In this case, an open twin $u$ is necessarily a leaf, adjacent to a vertex $w$, and let $U$ be the set of leaves adjacent to $w$ in $G$.  Let $G-u$ be an iso-unique Grundy total domination graph, and let $S$ be an arbitrary $\ggrt(G-u)$-sequence. We claim that $w$ is t-footprinted with respect to $S$ by a leaf $v\in U$. Suppose that $w$ is t-footprinted with respect to $S$ by $z$, which is a not a leaf. Then no vertex from $U$ belongs to $S$. Note that the sequence $S'$ obtained from $S$ by replacing $z$ with $v\in U$ is an open neighborhood sequence in $G-u$, hence $S'$ is a $\ggrt(G-u)$-sequence. This is a contradiction with $G-u$ being an iso-unique Grundy total domination graph, since $S'$ has more leaves than $S$ and so there is no automorphism of $G-u$ that maps $S$ to $S'$. We infer that $w$ is indeed t-footprinted by a leaf $v\in U$ in any Grundy total dominating sequence in $G-u$. Now, we claim that the same holds in $G$. Notably, if there exists a $\ggrt(G)$-sequence in which $w$ is footprinted by $z$, which is not a leaf, then the same sequence is an open neighborhood sequence in $G-u$, and so it is a $\ggrt(G-u)$-sequence, since $\ggrt(G)=\ggrt(G-u)$. This is a contradiction, which implies that $w$ is t-footprinted by a leaf in any $\ggrt(G)$-sequence in $G$. Let $S$ and $T$ be $\ggrt(G)$-sets. If any of them, say $S$, contains $u$, then $\phi_{u\leftrightarrow v}$ maps $S$ to a $\ggrt(G)$-set $\phi_{u\leftrightarrow v}(S)$, which is at the same time a $\ggrt(G-u)$-set. Since there exists an automorphism of $G-u$ that maps $\phi_{u\leftrightarrow v}(S)$ to the $\ggrt(G-u)$-set $T$ (or $\phi_{u\leftrightarrow v}(T)$, if $u$ is contained in $T$), there exists an automorphism of $G$ that maps $S$ to $T$. 
Hence $G$ is an iso-unique Grundy total domination graph.
 \end{proof}

The second statement of Proposition~\ref{prp:ggrt-twins} does not necessarily hold if $G$ contains a cycle. To see this consider the graph $H$, which is obtained from $C_5$ by adding an open twin to any vertex of the cycle. See Figure~\ref{fig:H}, where two copies of $H$ are depicted. Notice that two different $\ggrt(H)$-sets are depicted in these two copies of $H$, where vertices of a $\ggrt(H)$-set in each copy are black. It is thus clear that $H$ is not an iso-unique Grundy total domination graph. However, $H-u$ is isomorphic to $C_5$, and it is known~\cite[Proposition 6.1]{bhr-2016} that $\ggrt(C_n)=n-1$ if $n\ge 3$ is odd, and by symmetry of $C_n$ it follows that every odd cycle is an iso-unique Grundy total domination graph.

\begin{figure}[ht!]
\begin{center}
\begin{tikzpicture}[scale=0.52,style=thick]
\def\vr{4pt}
%% vertices defined %%
%M

%X
\path (-6,0) coordinate (a');
\path (-7.2,2.3) coordinate (b');
\path (-2.8,2.3) coordinate (c');
\path (-4,0) coordinate (d');
\path (-5,4) coordinate (e');
\path (-4.5,1.9) coordinate (c'1);

\draw (5.1,1.8) node [text=black]{$u$};
\draw (-4.9,1.8) node [text=black]{$u$};

%Y

\path (6,0) coordinate (a);
\path (7.2,2.3) coordinate (b);
\path (2.8,2.3) coordinate (c);
\path (4,0) coordinate (d);
\path (5,4) coordinate (e);
\path (5.5,1.9) coordinate (c1);

%% edges %%

\draw (b)--(a)--(d)--(c)--(e)--(b);
\draw (e)--(c1)--(a);

\draw (b')--(a')--(d')--(c')--(e')--(b');
\draw (e')--(c'1)--(d');

%% vertices %%

\draw (a') [fill=black] circle (\vr);
\draw (b') [fill=black] circle (\vr);
\draw (c') [fill=white] circle (\vr);
\draw (d') [fill=black] circle (\vr);
\draw (e') [fill=black] circle (\vr);
\draw (c'1) [fill=white] circle (\vr);

\draw (a) [fill=white] circle (\vr);
\draw (b) [fill=black] circle (\vr);
\draw (c) [fill=black] circle (\vr);
\draw (d) [fill=black] circle (\vr);
\draw (e) [fill=black] circle (\vr);
\draw (c1) [fill=white] circle (\vr);

\end{tikzpicture}
\end{center}
\caption{Graph $H$ with two $\ggrt(H)$-sets marked by black vertices.}
\label{fig:H}
\end{figure}

From Proposition~\ref{prp:ggrt-twins} we derive that when dealing with iso-unique Grundy total domination trees we may restrict our attention to trees with no open twins. In addition, from any such iso-unique Grundy total domination tree $T$ we can build infinite families of examples of iso-unique Grundy total domination trees by attaching an arbitrary number of leaves to support vertices. In the next result, we characterize all iso-unique Grundy total domination trees with no open twins, which thus essentially gives the description of all iso-unique Grundy total domination trees.

\begin{theorem}
\label{thm:ggrt-forest}
If $T$ is a non-trivial tree with no open twins, then $T$ is an iso-unique Grundy total domination graph if and only if $\ggrt(T)=n(T)$.
\end{theorem}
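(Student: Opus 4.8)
The "if" direction is immediate: if $\ggrt(T)=n(T)$, then the unique Grundy total dominating set of $T$ is $V(T)$ itself (since $T$ has no isolated vertices), so $T$ is trivially an iso-unique Grundy total domination graph. The content is in the "only if" direction.

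My plan is to prove the contrapositive: assuming $T$ is a forest with no isolated vertices and no open twins but $\ggrt(T)<n(T)$, I will exhibit two Grundy total dominating sets that cannot be exchanged by an automorphism. By Theorem~\ref{thm:BHR} (in its forest form noted after it), $\ggrt(T)=n(T)$ is equivalent to $T$ having a perfect matching; so $\ggrt(T)<n(T)$ means $T$ has no perfect matching. I would then take a maximum length open neighborhood sequence $S=(v_1,\dots,v_k)$, so that $\widehat S$ is a total dominating set but $\widehat S\neq V(T)$, and examine a vertex $w\notin\widehat S$. Since $\widehat S$ is a total dominating set, $w$ has a neighbor in $\widehat S$; by Proposition~\ref{prp:tunique} we may in fact choose a $\ggrt(T)$-sequence containing $w$, and moreover, using the reversal trick in the proof of that proposition, we have control over which vertices appear. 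The key structural step is to look at the t-footprinter function $f\colon V(T)\to\widehat S$ and the "forcing-chain"-like structure it induces: each vertex is t-footprinted exactly once, and two distinct $\ggrt$-sequences that disagree on a single vertex force an automorphism to exist. I expect to argue that the absence of a perfect matching, combined with the absence of open twins, produces a vertex $w\notin\widehat S$ whose unique neighbor-structure differs from that of its footprinter $f(w)$, so that swapping $w$ with $f(w)$ (or with another candidate footprinter) in an appropriate reversed sequence yields a second $\ggrt(T)$-set $S'$ with $\widehat{S'}\neq\widehat S$ — and then the two sets are distinguished by some automorphism-invariant (e.g.\ the number of leaves they contain, or the degree sequence of the induced subgraph, or the number of components).

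The main obstacle, and where I would spend the most care, is making the leap from "no perfect matching" to "a bad pair of $\ggrt$-sets exists". A natural route: if $T$ has no perfect matching, pick a maximum matching $M$ and an $M$-unsaturated vertex $z$; being in a forest with no isolated vertices, $z$ is the endpoint of an augmenting-type structure, and one can build one $\ggrt$-sequence that "uses" $z$ and another that avoids $z$ but uses a neighbor, the two sequences having different intersections with the leaf set or producing induced subgraphs with different numbers of edges. The no-open-twins hypothesis is exactly what is needed to rule out the escape hatch where the two candidate sets, though different as sets, are swapped by the twin-exchanging automorphism $\phi_{u\leftrightarrow v}$; so I would isolate a configuration where the differing vertices are not open twins (e.g.\ they lie at different distances from some fixed leaf, or one is a leaf and the other is not, as in the proofs of Lemma~\ref{l:endVerticesAreLeafs} and Proposition~\ref{prp:ggrt-twins}).

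Concretely, the steps in order are: (1) dispatch the easy direction; (2) reduce "only if" to showing: no perfect matching $\Rightarrow$ not iso-unique; (3) fix a maximum matching $M$, an unsaturated vertex $z$, and use the tree/forest structure to find an $M$-alternating path $z=u_0,u_1,\dots,u_{2t}$ with $u_{2t}$ a leaf or a vertex admitting a "detour"; (4) translate this alternating path into two open neighborhood sequences of maximum length whose vertex sets $\widehat S,\widehat{S'}$ differ; (5) check that $\widehat S$ and $\widehat{S'}$ have different values of an automorphism invariant — I would aim for "number of leaves contained" since that invariant drove every earlier non-existence argument in the paper — using the no-open-twins hypothesis to guarantee the differing vertices genuinely change the leaf count (or component count) rather than being interchangeable; (6) conclude. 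I anticipate step (4), the explicit construction of the second maximum sequence, to be the technically delicate point: one must verify the open neighborhood condition \eqref{eq:defGrundy} holds along the whole reversed/modified sequence, which is the same kind of bookkeeping as in Proposition~\ref{prp:tunique} but now with a controlled modification at one position.
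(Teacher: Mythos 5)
Your easy direction is fine and you have correctly located where the work lies, but the heart of your argument --- steps (4) and (5) --- is left as a plan rather than carried out, and the plan as stated has a real gap: the automorphism invariant you propose (number of leaves contained, component count, or degree sequence of the induced subgraph) is not fine enough to finish. The paper's proof starts exactly where you do: take a maximum open neighborhood sequence $S$ omitting a vertex $v$, let $w$ be the neighbor of $v$ that is t-footprinted last, let $u_j$ be its t-footprinter, and swap $u_j$ for $v$ to obtain a second maximum sequence $S'$. But this first swap does \emph{not} produce a contradiction. Both $v$ and $u_j$ are neighbors of $w$ on the same side of the bipartition, neither is a leaf (this is where the no-open-twins hypothesis enters --- not, as you suggest, to block a twin-exchanging automorphism between the final sets), and from the assumed iso-uniqueness one can only deduce that $v$ and $u_j$ lie at the \emph{same} distance from the center of their component; they may genuinely be exchanged by an automorphism. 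Leaf counts and component counts cannot separate $\widehat{S}$ from $\widehat{S'}$ here. The missing idea is a second reordering: writing $S=S_A\oplus S_B$ by bipartiteness and then splitting $S_B$ according to the two components of $T-vw$, one obtains a maximum sequence $S_1$ in which the last-footprinted neighbor of $v$ lies in the component of $T-vw$ containing $v$ (this step needs $v$ to be a non-leaf, hence the open-twin hypothesis); its footprinter $u_k$ then sits at distance $d(v,c)+2$ from the center, and swapping $u_k$ for $v$ strictly decreases the invariant $m(U)=\sum_{x\in U}\min\{d(x,c),d(x,c')\}$, which every automorphism preserves. That is the contradiction. Without this two-stage reordering and the distance-to-center measure, your argument does not close.

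A secondary point: the detour through Theorem~\ref{thm:BHR}, maximum matchings and $M$-alternating paths buys you nothing. The only consequence of ``no perfect matching'' that is ever used is $\ggrt(T)<n(T)$, i.e., that some maximum open neighborhood sequence omits a vertex --- which is available directly. Converting an alternating path into two maximum open neighborhood sequences is at least as delicate as the direct construction via footprinters and reorderings, and is not the route the paper takes; I would drop step (3) entirely and work with the sequence structure from the outset, as in Proposition~\ref{prp:tunique}.
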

\begin{proof}
We start with an observation about bipartite graphs with partition $V(G)=A+B$. If $S$ is an open neighborhood sequence in $G$, then the two subsequences $S_A$ and $S_B$ of $S$, which are obtained by taking only vertices in $A$ (respectively, $B$) in the same order as they appear in $S$ are clearly both open neighborhood sequences. More importantly, the subsequences are independent of each other, and so $S_A\oplus S_B$ and $S_B\oplus S_A$ are also open neighborhood sequences (with the same length as $S$).

Now, let $T$ be an iso-unique Grundy total domination tree with no open twins, and let $S$ be a Grundy total dominating sequence of $T$. Assume that on the contrary, $\ggrt(T)<n(T)$, and let $v\in V(T)$ be a vertex, which is not in $S$. Given the bipartition $V(T)=A+B$, we may assume that $v\in B$. In addition, by the observation above, we may assume without loss of generality that $S=S_A\oplus S_B$, where $S_A$ (respectively $S_B$) is the subsequence of the vertices in $S$ that belong to $A$ (respectively $B$). Let $S_B=(u_1,\ldots,u_b)$. Let $w$ be the neighbor of $v$, which is t-footprinted as the latest with respect to $S$ among all vertices in $N(v)$. Let $u_j$ be the t-footprinter of $w$. Clearly, $u_j\in B$. Now, it is easy to see that the sequence $S'$ obtained from $S$ by replacing $u_j$ with $v$ is also an open neighborhood sequence. Indeed, the initial segment $S_A\oplus (u_1,\ldots,u_{j-1})$ is the same in both sequences, $v$ footprints $w$, and since $\cup_{i=1}^{j-1}N(u_i)\cup \{v\}\subseteq \cup_{i=1}^{j}N(u_i)$, the remainder of $S'$ is also an open neighborhood sequence. 

Let $C$ be the center of $T$. Note that $C$ consists of either a vertex $c$ or two adjacent vertices $c$ and $c'$, and we consider the following measure for a set $U\subseteq V(T)$: $$m(U)=\sum_{v\in U}{\min\{d(v,c),d(v,c')\}},$$
which represents the sum of distances of vertices in $U$ from the center. 
(In the case when $C=\{c\}$, the above calculation simplifies, but formally, we may let $c'=c$ and use the same formula.) It is clear that for every automorphism of $T$, which maps a subset $U$ onto a subset $U'$, the equality $m(U)=m(U')$ holds. Therefore, since there is an automorphism that maps the $\ggrt(T)$-set $S$ to $S'$, we infer that $v$ and $u_j$ must be at the same distance from the center $C$, while their common neighbor $w$ is closer by $1$ to the center than each of $v$ and $u_j$. 
It is also clear that $v$ and $u_j$ are not leaves, since $T$ has no open twins. 

Now, consider the sequence $S$ again, and let $T_{vw}$ be the (sub)tree of $T$, which coincides with the component of $T-vw$ that contains $v$. Similarly, let $T_{wv}$ be the (sub)tree that coincides with the component of $T-vw$, which contains $w$ (and contains also $u_j$). Let $S_B'$ be the subsequence of $S_B$ of those vertices that belong to $T_{wv}$, and $S_B''$ be the subsequence of $S_B$ of those vertices that belong to $T_{vw}$. Note that $v$ does not belong to $S_B$ (as it does not belong to $S$), hence the subsequences $S_B'$ and $S_B''$ do not affect one another, because the distance between a vertex in one subsequence and a vertex in the other subsequence is at least $4$. We thus infer that $S_A\oplus S_B'\oplus S_B''$ is also an open neighborhood sequence, which we denote by $S_1$. Clearly, $S_1$ is a Grundy total dominating sequence. However, the neighbor of $v$, which is t-footprinted as the latest with respect to $S_1$ among all vertices in $N(v)$, is a neighbor $z$ of $v$, which lies in $T_{vw}$ (so it is not $w$ as in $S$). Let $u_k\in S_B''$ be the vertex that footprints $z$. Clearly, $u_k\in V(T_{vw})$, and $$d(u_k,c)=d(v,c)+2=d(u_j,c)+2.$$ Now, we replace $u_k$ with $v$ in $S_1$ and call the resulting sequence $S_2$. In the same way as earlier we derive that $S_2$ is also a Grundy total dominating sequence in $G$. However, due to the distances from the center of $v$ and of $u_k$, as shown above, we infer that 
$m(S_2)<m(S)$. This implies that there is no automorphism that maps $S$ onto $S_2$, which is a contradiction with $T$ being an iso-unique Grundy total domination graph. Hence, $\ggrt(T)=n(T)$. 

The reverse direction of the statement of the theorem is trivial. 
\end{proof}

By combining Proposition~\ref{prp:ggrt-twins} and Theorem~\ref{thm:ggrt-forest}, we get a characterization of all iso-unique Grundy total domination trees. The result can be best described by the following algorithm for recognition of such trees. 

\bigskip 

\noindent {\bf Algorithm Iso-Unique Grundy Total Domination Tree}
 
\noindent {\bf Input.} A tree $T$. 

\noindent {\bf Output.} YES if $T$ is an iso-unique Grundy total domination tree, NO otherwise.

\begin{itemize}
\item[(1)] Let $T'$ be the tree obtained from $T$ such that for every  strong support vertex $v$ of $T$, all but one leaf adjacent to $v$ are removed. (Note that $T'$ has no open twins.)

\item[(2)] If $T'$ has a perfect matching, then {\bf RETURN} YES, otherwise {\bf RETURN} NO.  
\end{itemize}

\bigskip

Note that a tree with no strong support vertices has no open twins. Hence, the tree $T'$ obtained in step (1) has no open twins. Hence, by Theorem~\ref{thm:ggrt-forest}, $T'$ is an iso-unique Grundy total domination graph if and only if $\ggrt(T')=n(T')$. By Theorem~\ref{thm:BHR} restricted to trees, $\ggrt(T')=n(T)$ if and only if $T'$ has a perfect matching. 
Finally, by Proposition~\ref{prp:ggrt-twins}, $T$ is an iso-unique Grundy total domination tree if and only if $T'$ is an iso-unique Grundy total domination tree. This proves the correctness of the algorithm. Clearly, each of the steps (1) and (2) can be performed in linear time.

\begin{theorem}\label{th:algorithmGT}
Algorithm Iso-Unique Grundy Total domination Tree verifies in linear time whether a given tree is an iso-unique Grundy total domination tree. 
\end{theorem}

%%%%%%%%%%%%%%%%%%%%%%%%%%%%%%%%%
%%%%%%%%%%%%%%%% L-GRUNDY
%%%%%%%%%%%%%%%%%%%%%%%%%%%%%%%%

\section{Unique L-Grundy domination graphs}
\label{sec:LGD}

In this section, we consider the remaining Grundy domination invariant, the L-Grundy domination number. We characterize unique L-Grundy domination graphs, and give some remarks for the iso-unique version. We start with the necessary formal definitions.

A sequence $S=(v_1,\ldots,v_k)$ of distinct vertices of $G$ is an {\em L-sequence}, if for each $i\in [k]$:

\begin{equation}
\label{eq:defLGrundy}
N[v_i] \setminus \bigcup_{j=1}^{i-1}N(v_j)\not=\emptyset.
\end{equation}

\noindent The maximum length of an L-sequence $S$ in $G$ is the {\em L-Grundy domination number}, $\ggrl(G)$, of $G$, and $\widehat{S}$ is an {\em L-Grundy dominating set}. Let $S=(v_1,\ldots ,v_k)$ be an L-sequence. We say that for each $i\in [k]$ vertex $v_i$ {\em L-footprints (with respect to $S$)} the vertices in $N[v_i] \setminus \bigcup_{j=1}^{i-1}N(v_j)$, and that $v_i$ is the {\em L-footprinter (with respect to $S$)} of any $u\in N[v_i] \setminus \bigcup_{j=1}^{i-1}N(v_j)$. Note that a vertex may be L-footprinted twice, once by itself, and later by one of its neighbors; see also~\cite{bbgkkptv-2017,hs}. This fact makes the proof of the next result slightly more involved than the proofs of similar results for other versions of Grundy domination. 

\begin{proposition}\label{prp:Lunique}
If $G$ is a graph and $x$ an arbitrary vertex of $G$, then there exists a $\ggrl(G)$-sequence that contains $x$. 
\end{proposition}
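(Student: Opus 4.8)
The statement to be proved is that for any graph $G$ and any vertex $x\in V(G)$, some $\ggrl(G)$-sequence contains $x$. The natural approach mirrors the proofs of Propositions~\ref{prp:unique}, \ref{prp:Zunique} and~\ref{prp:tunique}: start with an arbitrary $\ggrl(G)$-sequence $S=(v_1,\ldots,v_k)$, assume $x\notin\widehat S$, and build a new L-sequence of the same length containing $x$ by ``reversing'' $S$ and substituting $x$ in an appropriate position. Concretely, each $v_i$ L-footprints some vertex $v_i'\in N[v_i]\setminus\bigcup_{j<i}N(v_j)$; I would try to show that $S'=(v_k',v_{k-1}',\ldots,v_1')$ is again an L-sequence, because $v_i'$ (read in this reversed order) has $v_i$ available as a newly-covered closed-neighbour. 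Since $\widehat S$ is a dominating set of $G$, $x$ has a neighbour $v_j$ in $\widehat S$ (or $x=v_j$ for some $j$), and one hopes to arrange $v_j'=x$.

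\textbf{The main obstacle.} As the remark preceding the proposition warns, the L-footprinter map is \emph{not} a function: a vertex $u$ can be L-footprinted first by itself (satisfying $N[u]\setminus\bigcup_{j<i}N(v_j)\ni u$ when $v_i=u$) and later, or instead, by a neighbour; equivalently, when $v_i=x$ already lies in $\widehat S$ the ``footprinted'' vertex could be $x$ itself rather than a strict neighbour, and when we try to force $v_j'=x$ the required condition is $x\in N[v_j]\setminus\bigcup_{i<j}N(v_i)$, i.e. that $x$ is not in the \emph{open} neighbourhood of any earlier $v_i$ --- but $x$ could perfectly well be adjacent to some earlier vertex of $S$ while still not being in $\widehat S$. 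So the clean choice ``let $v_j$ be the footprinter of $x$'' that works in the closed-neighbourhood case needs more care here. The plan is therefore to choose $v_j$ to be the \emph{last} vertex of $S$ whose closed neighbourhood contains $x$, equivalently the last $v_j$ with $x\in N[v_j]$; then for this $j$ one has $x\notin N[v_i]$ for all $i>j$, and in particular $x\notin N(v_i)$ for $i>j$. Reading $S'$ in reverse order, the position of $v_j'$ comes \emph{after} the positions of all $v_i'$ with $i>j$, so at the moment $x=v_j'$ is used, none of the previously-used vertices in $S'$ (namely $v_k',\ldots,v_{j+1}'$, which are chosen neighbours of $v_k,\ldots,v_{j+1}$) --- wait, that is not quite it; I must check that $x\notin N(v_i')$ for $i>j$, not $x\notin N(v_i)$. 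Here I would use that $v_i'\in N[v_i]$, so a neighbour of $v_i'$ is within distance $2$ of $v_i$; that is not enough directly, so the honest route is: verify that $x=v_j'$ L-footprints $v_j$ (this needs $v_j\in N[v_j']=N[x]$, true since $v_j\in N(x)$ or $v_j=x$, and $v_j\notin\bigcup_{i>j}N(v_i')$), and separately re-verify the L-condition at every other position of $S'$ exactly as in the closed-neighbourhood proofs, noting that replacing $v_j'$ by $x$ only shrinks the relevant union $\bigcup N(\cdot)$ in a controlled way.

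\textbf{Execution order.} First, fix $S=(v_1,\ldots,v_k)$ a $\ggrl(G)$-sequence and assume $x\notin\widehat S$; record for each $i$ a choice of L-footprinted vertex $v_i'$. Second, argue $\widehat S$ is a dominating set (true for maximum L-sequences, as noted in the preliminaries) and let $v_j$ be the last index with $x\in N[v_j]$. Third, redefine $v_j':=x$ and set $S'=(v_k',\ldots,v_1')$; check that all the $v_i'$ for $i\neq j$ together with $x$ are distinct (here the ``last index'' choice and $x\notin\widehat S$ are used). Fourth, verify the defining inequality~\eqref{eq:defLGrundy} for $S'$ at each position: for a position holding $v_i'$ with $i\neq j$, show $v_i\in N[v_i']\setminus\bigcup(\text{earlier open neighbourhoods in }S')$ --- this is the same computation as in Proposition~\ref{prp:Zunique}/\ref{prp:unique}, using that $v_i'$ was L-footprinted by $v_i$ in $S$ and that in $S'$ the only earlier vertices are among $\{v_k',\ldots,v_{i+1}'\}$ with indices $>i$; the substitution of $x$ for $v_j'$ can only remove terms from these unions or, if $j>i$, it removes $N(v_j')$ and adds $N(x)\supseteq$ relevant set, which I would check does not destroy the condition because the only vertex needing protection is $v_i$ and $v_i\notin N(x)$ for $i$ in the dangerous range by the maximality choice of $j$. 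Fifth, verify the inequality at the position of $x$ itself: $v_j\in N[x]$ and $v_j\notin N(v_i')$ for all $i>j$, the latter from $v_i'$'s footprinting property in $S$ combined with $i>j$. Finally, conclude $S'$ is an L-sequence of length $k=\ggrl(G)$ with $x\in\widehat{S'}$, completing the proof. I expect the fourth step --- bookkeeping that swapping in $x$ does not spoil the L-condition at the other positions, precisely because of the non-functional footprinter subtlety --- to be the only genuinely delicate point; everything else is parallel to the earlier propositions.
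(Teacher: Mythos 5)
Your plan follows the reversal template of Propositions~\ref{prp:unique}, \ref{prp:Zunique} and~\ref{prp:tunique}, but for L-sequences this template breaks in two places, and the paper's actual proof avoids it entirely: it keeps $S$ in its original order and replaces a single carefully chosen vertex by $x$ (namely the \emph{last} $v_j$ that L-footprints some vertex of $N(x)$, so that no later $v_\ell$ loses its witness when $N(v_j)$ is swapped for $N(x)$), with a separate case in which $N(x)\subseteq N(v_1)\cup\dots\cup N(v_i)$, where one \emph{inserts} $x$ before its L-footprinter $v_i$ ($x$ then L-footprints itself) and contradicts maximality of $S$.

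The first gap in your version is the choice of $j$. You take $j$ to be the \emph{last} index with $x\in N[v_j]$ and then assert that ``$v_i\notin N(x)$ for $i$ in the dangerous range by the maximality choice of $j$.'' This is backwards. In $S'=(v_k',\dots,v_1')$ the vertex $x$ sits at the position of $v_j'$, so the positions whose union of open neighbourhoods acquires $N(x)$ are those holding $v_i'$ with $i<j$; maximality of $j$ only tells you $x\notin N[v_i]$ for $i>j$, i.e.\ it controls exactly the positions that come \emph{before} $x$ and are unaffected. For $i<j$ nothing prevents $v_i\in N(x)$, in which case the intended witness $v_i$ for $v_i'$ is destroyed. (Taking $j$ \emph{minimal} --- the first index with $x\in N(v_j)$ --- repairs this particular step: then $v_i\notin N(x)$ for all $i<j$, while $x$ still L-footprints $v_j$ at its own position since $v_\ell'\notin N(v_j)$ for $\ell>j$.) The second, more serious gap is the one you flag but do not close: $S'$ must be a sequence of \emph{distinct} vertices, and for L-sequences the map $i\mapsto v_i'$ need not admit distinct choices, precisely because a vertex can be L-footprinted twice --- once by itself and once by the first sequence-vertex adjacent to it --- so two indices $i<\ell$ can have their footprint sets both reduced to the same single vertex $v_i$ (when $v_i$ self-footprints only itself and $v_\ell$ footprints only $v_i$). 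This is exactly the subtlety the sentence preceding the proposition warns about, and it is why the paper does not reverse the sequence in the L-case. Without an argument that a system of distinct representatives exists (or a different construction, as in the paper), the proof is incomplete.
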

\begin{proof}
Let $x$ be an arbitrary vertex of $G$ and let $S=(v_1,\ldots ,v_k)$ be a $\ggrl(G)$-sequence. For any $i \in [k]$ we denote by $v_i'$ an arbitrary vertex that is $L$-footprinted by $v_i$. If $k=\ggrl(G)=n(G)$ or if $x \in S$, then the statement follows. Thus we may assume that $k \leq n(G)-1$ and that $x \notin S$. Since $\widehat{S}$ is a (total) dominating set, $x$ has at least one neighbor in $\widehat{S}$. Let $v_i \in S$ be the L-footprinter of $x$, with respect to $S$. Since $x \notin \widehat{S}$, $v_i \neq x$. If $N(x) \subseteq N(v_1) \cup \ldots \cup N(v_i)$, then $S'=(v_1,\ldots , v_{i-1},x,v_i,\ldots , v_k)$ is an L-sequence, as $x$ L-footprints itself, $v_i$ L-footprints $x$, and for any $j > i$, $v_j$ $L$-footprints $v_j'$. Since $|S'|=k+1 > \ggrl(G)$ we get a contradiction. Thus $N(x) \nsubseteq N(v_1) \cup \ldots \cup N(v_i)$. Let $v_j \in \{v_{i+1},v_{i+2},\ldots , v_k\}$ be the last vertex from $S$ that L-footprints a vertex from $N(x)$. Let $a \in N(x)$ be a vertex L-footprinted by $v_j$. First note that $a \neq v_j$. Indeed, if $a=v_j$, then $v_j$ footprints itself, and thus it is also footprinted by $v_{\ell}$ for $\ell > j$, which contradicts the choice of $j$. As $a \neq v_j$, $S'=(v_1,\ldots , v_{j-1},x,v_{j+1},\ldots , v_k)$ is an L-sequence ($x$ L-footprints $a$ and for any $\ell > j$, $v_{\ell}$ L-footprints $v_{\ell}'$). Since $x \in S'$ and $|S'|=\ggrl(G)$, the proof is complete.      
\end{proof}

\begin{corollary}\label{cor:Lunique}
If $G$ is a graph, and ${\cal S}=\{S:\, S \textrm{ is a }\ggrl(G) \textrm{-sequence of }G\}$, then
$$\bigcup_{S\in {\cal S}}{ \widehat{S}=V(G).}$$
\end{corollary}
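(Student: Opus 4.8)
The statement to prove is Corollary~\ref{cor:Lunique}, which asserts that the union of all L-Grundy dominating sets covers the whole vertex set. This is the immediate analogue of Corollary~\ref{cor:unique} and Corollary~\ref{cor:tunique}, both of which were derived from the corresponding existence propositions. The plan is to deduce it directly from Proposition~\ref{prp:Lunique}.

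First I would fix an arbitrary vertex $x\in V(G)$. By Proposition~\ref{prp:Lunique}, there exists a $\ggrl(G)$-sequence $S$ with $x\in\widehat{S}$, hence $S\in{\cal S}$ and $x\in\bigcup_{S\in{\cal S}}\widehat{S}$. Since $x$ was arbitrary, $V(G)\subseteq\bigcup_{S\in{\cal S}}\widehat{S}$. The reverse inclusion $\bigcup_{S\in{\cal S}}\widehat{S}\subseteq V(G)$ is trivial, since every $\widehat{S}$ is by definition a subset of $V(G)$. Combining the two inclusions gives the claimed equality.

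There is essentially no obstacle here: unlike the L-Grundy existence proposition itself — where the subtlety of a vertex L-footprinting itself forced a more involved argument — the corollary is a one-line corollary of that proposition. The only thing to be mildly careful about is the degenerate case $G$ having isolated vertices or being edgeless; but Proposition~\ref{prp:Lunique} is stated for an arbitrary graph $G$ and an arbitrary vertex $x$ (an isolated vertex $x$ L-footprints itself and forms a valid length-one L-sequence, and more to the point the proposition already handles $\ggrl(G)=n(G)$ and $x\in S$ as base cases), so no special casing is needed. I would therefore present the proof in two sentences, mirroring the proofs of Corollaries~\ref{cor:unique} and~\ref{cor:tunique}.

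\begin{proof}
Since every $\widehat{S}$ with $S\in{\cal S}$ is a set of vertices of $G$, the inclusion $\bigcup_{S\in{\cal S}}\widehat{S}\subseteq V(G)$ is clear. For the reverse inclusion, let $x\in V(G)$ be arbitrary. By Proposition~\ref{prp:Lunique}, there exists a $\ggrl(G)$-sequence $S$ that contains $x$; thus $S\in{\cal S}$ and $x\in\widehat{S}\subseteq\bigcup_{S\in{\cal S}}\widehat{S}$. As $x$ was arbitrary, $V(G)\subseteq\bigcup_{S\in{\cal S}}\widehat{S}$, and equality follows.
\end{proof}
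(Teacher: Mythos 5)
Your proposal is correct and matches the paper exactly: the paper states Corollary~\ref{cor:Lunique} without proof, treating it as an immediate consequence of Proposition~\ref{prp:Lunique}, which is precisely the two-inclusion argument you spell out. Nothing is missing, and your aside about isolated vertices being harmless is accurate since Proposition~\ref{prp:Lunique} is stated for arbitrary graphs and arbitrary vertices.
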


\begin{corollary}\label{cor:Lunique_characterization}
A graph $G$ is a unique L-Grundy domination graph if and only if $\ggrl(G)=n(G)$.
\end{corollary}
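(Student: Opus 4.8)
The plan is to mirror the structure of the analogous characterizations already proved in the excerpt, namely Corollary~\ref{cor:Zunique_characterization}, Corollary~\ref{cor:unique_characterization}, and Corollary~\ref{cor:tunique_characterization}. The statement to prove is that a graph $G$ is a unique L-Grundy domination graph if and only if $\ggrl(G)=n(G)$. For the easy direction, if $\ggrl(G)=n(G)$, then the only $\ggrl(G)$-set is $V(G)$ itself, since no proper subset of $V(G)$ can have size $n(G)$; hence $G$ has a unique L-Grundy dominating set and is therefore a unique L-Grundy domination graph.

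For the converse, I would argue by contraposition (or contradiction). Suppose $\ggrl(G)<n(G)$, i.e.\ $\ggrl(G)\le n(G)-1$. Let $S$ be an arbitrary $\ggrl(G)$-sequence; since $|\widehat{S}|=\ggrl(G)\le n(G)-1$, there is some vertex $x\in V(G)\setminus\widehat{S}$ that does not appear in $S$. By Proposition~\ref{prp:Lunique}, there exists a $\ggrl(G)$-sequence $S'$ with $x\in\widehat{S'}$. Then $\widehat{S}$ and $\widehat{S'}$ are two L-Grundy dominating sets of $G$ with $\widehat{S}\neq\widehat{S'}$ (they differ in $x$), so $G$ is not a unique L-Grundy domination graph. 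This completes the contrapositive, establishing that a unique L-Grundy domination graph must satisfy $\ggrl(G)=n(G)$.

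I do not expect any serious obstacle here: the only substantive ingredient is Proposition~\ref{prp:Lunique}, which guarantees that every vertex lies in some maximum L-sequence, and this has already been established. The one small point worth a sentence of care is making sure that when $\ggrl(G)\le n(G)-1$ there genuinely is a missing vertex for \emph{every} choice of $S$ — this is immediate from the cardinality bound $|\widehat{S}|=\ggrl(G)$. Thus the proof is essentially identical in form to that of Corollary~\ref{cor:tunique_characterization}, with the isolated-vertex bookkeeping removed because Corollary~\ref{cor:Lunique} shows $\bigcup_{S\in\mathcal{S}}\widehat{S}=V(G)$ with no exceptional set of isolated vertices (an isolated vertex $v$ satisfies $N[v]\setminus\emptyset=\{v\}\ne\emptyset$, so it can always start an L-sequence).
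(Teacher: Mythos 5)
Your proof is correct and follows exactly the route the paper intends: the corollary is stated without proof precisely because it follows from Proposition~\ref{prp:Lunique} by the same argument as Corollaries~\ref{cor:Zunique_characterization}, \ref{cor:unique_characterization} and~\ref{cor:tunique_characterization}, which is what you reproduce. Your side remark about isolated vertices (that they can always extend an L-sequence, so no exceptional set is needed) is also the right observation explaining why Corollary~\ref{cor:Lunique} has no $I$ term.
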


There are several graph families that enjoy $\ggrl(G)=n(G)$.  In particular, every forest $T$ with no isolated vertices enjoys $\ggrl(T)=n(T)$; see~\cite[Theorem 5.1]{bghk-2020}. A complete characterization of such graphs is yet to be found.

It is clear that a graph $G$ with $\ggrl(G)=n(G)$ is an iso-unique L-Grundy domination graph and hence all forests are iso-unique L-Grundy domination graphs. There are also (many) iso-unique L-Grundy domination graphs with $\ggrl(G) \leq n(G)-1$. Some simple examples are complete graphs of order at least 3, cycles, and complete bipartite graphs $K_{m,n}$ with $m,n \geq 2$.

%%%%%%%%%%%%%%
%%% DODATNO
%%%%%%%%%%%%%%%%
\section{Concluding remarks}

In this paper, we presented characterizations of graphs that have unique Grundy dominating sets for all four different types of Grundy domination. All characterizations yield very special graphs. In the cases of unique Grundy total domination graphs and unique L-Grundy domination graphs these are the graphs in which the corresponding Grundy dominating set coincides with the vertex set of the graph (minus isolated vertices in the former case). While the former graphs have been characterized (Theorem~\ref{thm:BHR}), the structure of the latter graphs is largely unknown, hence we propose the following

\begin{problem}
Characterize the graphs $G$ with $\ggrl(G)=|V(G)|$. 
\end{problem} 

When uniqueness condition is weakened by involvement of automorphisms,
the situation is completely resolved for Grundy domination; notably the iso-unique Grundy domination graphs are precisely the graphs in which connected components are cliques (Theorem~\ref{thm:ggrAutounique}). In the  other three cases, we could only give characterizations of forests that enjoy the iso-uniqueness condition. The case of iso-unique L-Grundy domination forests is in a sense trivial, since it is known that all forests $T$ enjoy $\ggrl(T)=|V(T)|$, which implies that all forests are iso-unique L-Grundy domination graphs. The cases of Grundy total domination and Z-Grundy domination are much more involved, but we could provide characterizations that yield efficient algorithms for the recognition of these two classes of forests. For three of the four invariants, the recognition of the corresponding forests can be found in linear time or is even trivial, while the algorithm for recognition of iso-unique zero forcing forests  (see Theorem~\ref{th:algorithmZF}) is quadratic. The question remains whether this is optimal or a sub-quadratic algorithm exists. 

Among the three unresolved iso-unique classes of graphs, we feel that iso-unique Grundy total domination graphs could the most accessible ones. By Proposition~\ref{prp:ggrt-twins} we know that removing an open twin from an iso-unique Grundy total domination graph gives a graph in this class. We also know that graphs $G$ with $\ggrt(G)=n(G)$ and odd cycles are in this class, but it would be interesting to find more examples of iso-unique Grundy total domination graphs. The ultimate goal is to resolve the following problem.

\begin{problem}
Characterize the iso-unique Grundy total domination graphs. 
\end{problem}

Another natural open question is whether one can extend the polynomial recognition algorithms from iso-unique Grundy total domination forests, and iso-unique zero forcing forests, respectively, to larger classes of graphs. In addition, since the recognition of iso-unique Grundy domination graphs is polynomial, it would be interesting to see if the same holds for the other three iso-unique classes of graphs. 

\begin{problem}
Is there a polynomial time algorithm to recognize the class of iso-unique Grundy total domination graphs, iso-unique L-Grundy domination graphs, or iso-unique zero forcing graphs, respectively?
\end{problem}
 
If the answer to some of the above questions is negative (which we suspect), one could restrict it to special families of graphs that contain forests. In particular, what happens with the complexity of the above three problems in chordal graphs? 
%%%%%%%%%%%%%%%%%%%%%%%%%%%%%%%%%%%%%%%%%%%%%%%%%%%%%%%%%%%%%%%%%%%%%
%%%%%%%%%%%%%%%%%%%%%%%%%%%%%%%%%%%%%%%%%%%%%%%%%%%%%%%%%%%%%%%%%%%%%

%%%%%%%%%%%%%%%%%%%%%%%%%%%%%%%%%%%%%%%%%%%%%%%%%%%%%%%%%%%%%%%%%%%%%
%%%%%%%%%%%%%%%%%%%%%%%%%%%%%%%%%%%%%%%%%%%%%%%%%%%%%%%%%%%%%%%%%%%%%

\section*{Acknowledgement}

The authors acknowledge the financial support from the Slovenian Research and Innovation Agency (research core funding No.\ P1-0297 and project grants N1-0285 and J1-4008). 

%%%%%%%%%%%%%%%%%%%%%%%%%%%%%%%%%%%%%%%%%%%%%%%%%%%%%%%%%%%%%%%%%%%%%
%%%%%%%%%%%%%%%%%%%%%%%%%%%%%%%%%%%%%%%%%%%%%%%%%%%%%%%%%%%%%%%%%%%%%
%\nocite{*}

%%%%%%%%%%%%%%%%%%%%%%%%%%%%%%%%%%%%%%%%%%%%%%%%%%%%%%%%%%%%%%%%%%
\medskip

\end{document}